  \chardef\forshowkeys=0
  \chardef\refcheck=0
  \chardef\showllabel=0
  \chardef\sketches=0
  \chardef\showcolors=0
\def\llabel#1{\marginnote{\color{gray}\rm(#1)}[-0.0cm]\notag}
\def\llabel#1{\notag}
\newtheorem{theorem}{Theorem}[section]
\newtheorem{Lemma}[theorem]{Lemma}
\theoremstyle{definition}
\newtheorem{Remark}[theorem]{Remark}
\def\lec{\lesssim}
   \def\Pas{\indeq\mathbb{P}\text{-a.s.}}
   \def\PP{{\mathbb P}}
   \def\um{u^{(m)}}
   \def\umm{u^{(m-1)}}
   \def\uu{{{u}}}
   \def\startnewsection#1#2{\section{#1}\label{#2}\setcounter{equation}{0}}   
   \def\NNp{{\mathbb N}}
    \def\TT{{\mathbb T}}
   \def\WW{{\mathbb W}}
   \def\EE{{\mathbb E}}
   \def\comma{ {\rm ,\qquad{}} }            
   \def\commaone{ {\rm ,\quad{}} }         
   \def\indeq{\qquad{}}                     
\def\TT{\mathbb T}
\def\tilde{\widetilde}
\def\PP{\mathbb{P}}
\def\indeq{\quad{}}
  \def\cole{\color{coloroftheorems}}
  \definecolor{colorcccc}{rgb}{0.7,0.7,0.7}
  \def\colb{\color{black}}
  \definecolor{colorpppp}{rgb}{0.6,0.0,0.1}
  \definecolor{colorgggg}{rgb}{.0,0.4,0.0}
  \definecolor{colorhhhh}{rgb}{0,0.6,0.2}
  \definecolor{colorgray}{rgb}{0.8,0.8,0.8}
  \definecolor{coloroftheorems}{rgb}{0.6,0.0,0.6}
  \definecolor{colorigor}{rgb}{1, 0.2, 0.8}
  \definecolor{amethyst}{rgb}{0.6, 0.4, 0.8}
  \definecolor{colororange}{rgb}{0.8,0.2,0}
  \definecolor{colorpurple}{rgb}{0.6,0.0,0.6}
  \def\cole{}
  \definecolor{colorcccc}{rgb}{0,0,0}
  \def\colb{\color{black}}
  \definecolor{colorpppp}{rgb}{0,0,0}
  \definecolor{colorgggg}{rgb}{0,0,0}
  \definecolor{colorhhhh}{rgb}{0,0,0}
  \definecolor{colorgray}{rgb}{0,0,0}
  \definecolor{coloroftheorems}{rgb}{0,0,0}
  \definecolor{colorigor}{rgb}{0,0,0}
  \definecolor{amethyst}{rgb}{0,0,0}
  \def\cole{\color{coloroftheorems}}
  \definecolor{colororange}{rgb}{0.8,0.2,0}
  \definecolor{colorpurple}{rgb}{0.6,0.0,0.6}
  \def\bea{\begin{align}}
  \def\ena{\end{align}}
\def\bega{\begin{aligned}}
  \def\enda{\end{aligned}}
\def\bcase{\begin{cases}}
  \def\ecase{\end{cases}}
\def\bmx{\begin{bmatrix}}
  \def\emx{\end{bmatrix}}
\def\um{u^{(m)}}
\def\uu{{{u}}}
\def\WW{ W}
\def\NNp{{\mathbb N}}
\begin{document}
\baselineskip=12.6pt

$\,$
\vskip1.2truecm
\title[The Navier-Stokes equations with transport noise in critical $H^{1/2}$ space]{The Navier-Stokes equations with transport noise in critical $H^{1/2}$ space}

\author[M.S.~Ayd\i n]{Mustafa Sencer Ayd\i n}
\address{Department of Mathematics, University of Southern California, Los Angeles, CA 90089}
\email{maydin@usc.edu}


\author[F.H.~Xu]{Fanhui Xu}
\address{Department of Mathematics, Amherst College, Amherst, MA 01002}
\email{fxu@amherst.edu}


\begin{abstract}
	We study the Navier-Stokes equations with transport noise in critical function spaces. Assuming the initial data belongs to $H^{1/2}$
	almost surely, we establish the existence and uniqueness of a local-in-time probabilistically strong solution. Moreover, we show that the probability of global existence can be made arbitrarily close to $1$ by choosing the initial data norm sufficiently small, and that the solution norm remains small for all time. Our analysis is independent of the compactness of the spatial domain, and consequently, the results apply both to the three-dimensional torus and to the whole space. 
	\hfill 
	\today
\end{abstract}

\maketitle

\date{}

\startnewsection{Introduction}{sec01}

In this work, we aim to solve the initial value problem for the following stochastic Navier-Stokes equations (SNSE),
\begin{align} 
	\begin{split}
		&	(\partial_t - \Delta) u = -\mathcal{P}\nabla \cdot  (u \otimes u) +  \mathcal{P}((b\cdot \nabla) u)\dot{W}
		\\
		&\nabla\cdot u = 0,\\
		&u(0)=u_0\comma t\geq 0, ~x\in \TT^3
		,
	\end{split}
	\label{EQ01}
\end{align}
where $u$ denotes the velocity field of a viscous, incompressible Newtonian flow, and $\mathcal{P}$ is the Leray projector onto the divergence and average-free fields, which eliminates the pressure gradient from the original Navier-Stokes equations. In the stochastic term, we take $W$ to be a standard real-valued Brownian motion defined on a given stochastic basis $(\Omega, \mathcal{F}, (\mathcal{F}_t)_{t\geq 0}, \mathbb{P})$. We let the transport field $b$ satisfy $\nabla \cdot b =0$ and 
\begin{align}
	\Vert (b\cdot \nabla) f\Vert_{H^{s}}
	 \le \epsilon_b \Vert f\Vert_{H^{s+1}}\comma s\in \left\{0, 1/2\right\},
	 \label{EQ14}
\end{align}
with $\epsilon_b >0$ to be chosen small, and we assume that the initial datum $u_0$ is divergence- and average-free, and small in $H^{1/2}(\TT^3)$ almost surely. 

The original motivation for formulating the SNSE was to introduce random forcing to model environmental perturbations and turbulent fluctuations in fluid flows (see~\cite{BeT, BCF, CC}). In particular, the SNSE with transport-type noise is obtained by perturbing the advecting velocity field, and it serves as a representation of the impact of small-scale turbulence on large-scale fluid dynamics (see~\cite{CCD, MR, MR2, FP} and references therein). A seminal result by Flandoli, Gubinelli, and Priola~\cite{FGP} showed that transport-type noise can restore well-posedness for linear transport equations that are ill-posed in the deterministic setting. This naturally leads to the broader question of whether such noise can promote well-posedness for nonlinear PDEs, particularly those arising in fluid dynamics.

The answer can be either positive or negative. For example, Coghi and Maurelli~\cite{MM} showed that transport noise restores uniqueness for the 2D Euler equations with $L^p$ initial data for suitable~$p$. In contrast, Hofmanov\'{a}, Lange, and Pappalettera~\cite{HLP} proved that, for infinitely many H\"older-continuous initial data, the 3D Euler equations with transport noise lack uniqueness in law. In the viscous case, Flandoli and Luo~\cite{FL} studied the vorticity formulation of the Navier–Stokes equations with transport noise and showed that such noise imposes a bound on vorticity, yielding well-posedness with high probability. Subsequently, Luo~\cite{L} demonstrated dissipation enhancement by transport noise for the 2D SNSE in velocity form and suppression of blow-up by transport noise for the 3D SNSE in vorticity form. More recently, Agresti~\cite{Aa} established the global existence of smooth solutions, with high probability, for the Navier–Stokes equations with hyperviscosity. Similar phenomena also occur in other nonlinear PDEs: for instance, Flandoli, Gubinelli, and Luo~\cite{FGL} proved that transport-type noise can suppress blow-up in the 3D Keller-Segel and Fisher-KPP equations, thereby ensuring long-time existence of solutions with high probability.

In this paper, we consider the 3D SNSE with transport noise in the velocity form, focusing on the critical $H^{1/2}$ space. Recall that the Navier-Stokes equations possess the following scaling: if $u(x,t)$ solves the equations with initial datum $u_0(x)$, then 
$\lambda u(\lambda x, \lambda^2 t)$ is a solution corresponding to initial datum $\lambda u_0 (\lambda x)$.
A function space is called \textit{critical} if it is invariant under this scaling. Examples of critical spaces for the three-dimensional Navier-Stokes equations include
\begin{align*}
	\dot{H}^\frac12 \subset L^3 
	 \subset \dot{B}^{-1+\frac{3}{p}}_{p,\infty}
	  \subset \text{BMO}^{-1}
	   \subset \dot{B}^{-1,\infty}_\infty,
\end{align*} 
where $p<\infty$. These spaces are often viewed as the threshold of regularity at which one may expect well-posedness. 
The first critical-space result for the deterministic Navier–Stokes equations was obtained by Fujita and Kato~\cite{FK}, who established well-posedness in 3D for initial data in $\dot{H}^{1/2}$. They proved local existence of solutions for arbitrary divergence-free data and global existence for sufficiently small data. This result was later extended to $L^3$ by Kato~\cite{K}, who established local existence for arbitrary $L^p$ initial data with $p \geq 3$ and global existence for small $L^p$ data. In the setting of critical Besov spaces, positive results were obtained in~\cite{Pl, C}, while Bourgain and Pavlovi\'c~\cite{BP} showed ill-posedness in the sense of norm inflation in $\dot{B}^{-1,\infty}_\infty$; see also~\cite{L-R} for an extensive exposition. Around the same time, Koch and Tataru~\cite{KT} proved well-posedness for small initial data in $\text{BMO}^{-1}$. Most recently, non-uniqueness of smooth solutions originating from large data in $\text{BMO}^{-1}$ was established in~\cite{CP}.

There has been growing interest in the well-posedness of the SNSE in critical spaces. For example, Kim~\cite{Ki} and Mohan and Sritharan~\cite{MoS} worked toward extending Kato’s framework~\cite{FK, K} to the stochastic setting. In~\cite{Ki}, Kim established the existence of a unique local strong solution to the 3D SNSE in $H^s$ for $s > 1/2$, and proved global existence with high probability for small data. In~\cite{MoS}, the SNSE with L\'evy noise was studied in $L^p(\mathbb{R}^d)$, for $d \geq 2$, $p \geq d$, with initial data in $L^d(\mathbb{R}^d)$, yielding a unique local mild solution. However, in both works, the global existence of a (probabilistically) strong solution at critical values remained out of reach, reflecting the challenges in extending deterministic results to the stochastic setting: on one hand, compactness arguments become more delicate in the presence of stochastic forcing; on the other hand, critical spaces leave little room for embedding inequalities to control the nonlinear term, making it difficult to establish convergence in a strong topology.

In a series of works, the authors of the present paper, together with Kukavica, Wang, and Ziane, have also investigated the SNSE with \textit{state-dependent} noise in nearly critical and critical spaces. Specifically, \cite{KXZ, KX1, KWX} established the existence of a unique local strong solution in $L^p$ for $p > 3$, on the torus and in the whole space respectively, thereby almost recovering the deterministic counterpart. These results were subsequently extended to $L^3$ in \cite{KX2} and to $H^{1/2}$ in \cite{AKX}, where global existence was established on a large part of the probability space for small initial data, by decomposing the equation into a sequence of Navier-Stokes-type equations. In \cite{AKX2}, we removed the smallness assumption and proved local existence in $L^3$ for general $L^3$ initial data $u_0$ by decomposing $u_0=v_0+w_0$ with $\|v_0\|_{L^3}$ sufficiently small and $\|w_0\|_{L^6}$ arbitrary. We also note the contributions of Agresti and Veraar~\cite{AV}, who studied the SNSE with transport noise. They established local well-posedness in a range of Besov spaces and proved almost global results for small initial data. In particular, they considered the space $B^{-1+3/q}_{3,3}$ for a range of $q$ values, including $q = 3$, using maximal regularity techniques.

The present paper establishes \textit{almost global existence} (i.e., global existence with high probability) for the initial value problem~\eqref{EQ01} with small initial data in $H^{1/2}$, and derives \textit{energy-type inequalities} (see~\eqref{EQ65}--\eqref{EQ66} below). We show that \textit{if the solution starts small in $H^{1/2}$, then it remains small in $H^{1/2}$ almost surely, which in turn guarantees pathwise uniqueness}. We note that the model~\eqref{EQ01} is not covered by~\cite{AKX}. Unlike~\cite{AKX}, where approximate models were obtained by decomposing the equation into an infinite sum of difference equations and the initial data into an infinite sum of $H^{\frac{1}{2}+\delta}$ components, we instead work with truncated models and estimate the probability that they coincide with the original model. We also emphasize the methodological differences among works addressing similar problems. In~\cite{AKX}, the difficulty arising from the critical space was overcome by constructing global solutions to the difference equations in $H^{\frac{1}{2}+\delta}$ and subsequently deriving an $H^{1/2}$ bound via the energy dissipation of the deterministic heat equation, while in this work, global solutions to the approximate models are obtained directly in $H^{1/2}$ with the help of cutoff functions. In~\cite{Ki}, both cutoff functions and contraction mappings were utilized to establish global solutions to the truncated model; however, the condition $s>1/2$ played a key role in extending local approximate solutions to global ones. This difficulty does not arise in the present work, since we show that the solution norms remain small for all time, which helps to close the energy estimates. The local-in-time existence result for large data will be addressed in a future work.

In Section~\ref{sec02} below, we introduce the notation for what follows and outline the main results. In Section~\ref{sec03}, we prove the main theorem.

\startnewsection{Preliminaries and Main Results}{sec02}

In this section, we unify the notation for the sequel discussions and state our main results. We use $C$ to denote a generic constant whose value may change from line to line. 
Meanwhile, we write $a \lec b$ to mean that there exists a constant $C>0$ such that $a \leq C b$. 
When it is necessary to emphasize the dependence of $C$ on certain parameters, we indicate this by adding the parameters as subscripts of $C$ or $\lec$.

Recall that, when $u$ is sufficiently smooth, the classical Leray projector $\mathcal{P}$ on $\mathbb{T}^3$ is given by
\begin{align}
	( \mathcal{P} \uu)_j( x)=\sum_{k=1}^{3}( \delta_{jk}+R_j R_k) \uu_k( x)\comma j=1,2,3,
	\llabel{EQ000002}
\end{align} 
with $R_j$'s representing the Riesz transforms. Based on this, we define the average-free Leray projection as 
\begin{equation*}
\mathcal{P}_0 u = \mathcal{P} u - \frac{1}{|\TT^3|}\int_{\TT^3}  \mathcal{P} u \, dx.
\end{equation*}
Note that the Leray projector \( \mathcal{P}\) does not change the spatial average of a function on the torus. Hence, if \(u\) is average-free, then so are \(\nabla \cdot (u \otimes u)\) and \(\mathcal{P}\nabla \cdot (u \otimes u)\). In this case, \(\mathcal{P}\) coincides with \(\mathcal{P}_0\). Moreover,  
\begin{equation*}
\int_{\TT^3} (b \cdot \nabla) u \, dx
= \int_{\TT^3} \nabla \cdot (b\otimes u) \, dx=0
\end{equation*}
if $\nabla \cdot b = 0$. Therefore, 
\begin{equation*}
\int_{\TT^3} \mathcal{P} \big( (b \cdot \nabla) u \big) \, dx=\int_{\TT^3} (b \cdot \nabla) u \, dx = 0.
\end{equation*}

Let $k \in \mathbb{Z}^3$. For $s \in \mathbb{R}$ and $p \in [1,\infty)$, we introduce the Bessel–Sobolev spaces,
\begin{align}
	W^{s,p} = \left\{ u \in \mathcal{S}'(\TT^3) : 
	\bigl((1+|k|^2)^{s/2}\hat{u}\bigr)^{\vee} \in L^p(\TT^3)
	\right\}.
	\label{EQ63}
\end{align}
It is known that for $s\geq 0$ and $1 < p < \infty$, the $W^{s,p}$-norm is equivalent to the standard (Slobodeckij) Sobolev norm. If $s<0$, then \eqref{EQ63} agrees with the dual-space definition. We abbreviate $W^{s,p}$ as $H^s$ for $p=2$.

We will repeatedly apply the Sobolev product and embedding inequalities in the sequel to estimate the advection term, written as:
\begin{align}
	\Vert u \otimes v\Vert_{H^{\frac{1}{2}}}
	\lec 
	\Vert u \Vert_{L^{3}}\Vert v \Vert_{W^{\frac{1}{2}, 6}}
	+\Vert v \Vert_{L^{3}}\Vert u \Vert_{W^{\frac{1}{2}, 6}}
	\lec
	\Vert u\Vert_{H^{\frac12}}\Vert v\Vert_{H^{\frac{3}{2}}}
	+\Vert v\Vert_{H^{\frac12}}\Vert u\Vert_{H^{\frac{3}{2}}}
	.\label{EQ15}
\end{align}
\colb

To establish a probabilistically strong solution, we fix a stochastic basis $(\Omega, \mathcal{F},(\mathcal{F}_t)_{t\geq 0},\mathbb{P})$
that satisfies the standard assumptions. We say that a pair $(u, \tau)$ solves~\eqref{EQ01} in the probabilistically strong sense if $\tau$ is a positive stopping time $\PP$-almost surely and u satisfies the following analytical weak formulation:
\begin{align}
	\begin{split}
	\bigl(u(t)-u_0, \varphi\bigr)=\int_0^t \bigl(u(s), \Delta\varphi\bigr)+\bigl( \mathcal{P}  (u \otimes u),  \nabla \varphi\bigr)\,ds-\int_0^t \bigl(  \mathcal{P} (b \otimes u),     \nabla  \varphi\bigr)\,dW_s, \Pas
	\end{split}
\end{align}
for all $\varphi\in C^{\infty}(\TT^3)$ and all $t\in (0, \tau)$.

Our main theorem is stated below. 
\cole
\begin{theorem}\label{T01}
	Suppose $u_0$ is divergence-free and average-free, and that the parameter $\epsilon_b$ in~\eqref{EQ14} is sufficiently small. Then for every $p_0 \in (0,1)$, there exists $\epsilon_0 \in (0,1)$ such that if $\sup_{\Omega}\|u_0\|_{H^{1/2}} \le \epsilon_0$, the initial value problem~\eqref{EQ01} admits a unique (probabilistically) strong solution $(u,\tau)$ satisfying
	\begin{align}
		\begin{split}
			\EE\biggl[&
			\sup_{0\leq t\leq \tau}
			\Vert u(t)\Vert_{H^{\frac12 }}^2
			+ \int_0^{\tau} \Vert u(t)\Vert_{H^{\frac{3}{2}}}^2 \,dt\biggr]
			\leq C
			\EE \left[	\Vert u_0\Vert_{H^{\frac12 }}^2\right]
		\end{split}
		\label{EQ65}
	\end{align}
	and
	\begin{align}
			\sup_{0\leq t\leq \tau}
			\Vert u(t)\Vert_{H^{\frac12 }}^2
			+ \int_0^{\tau} \Vert u(t)\Vert_{H^{\frac{3}{2}}}^2 \,dt
			\leq C \bar{\epsilon}^2
			\comma \PP ~a.s.,
		\label{EQ66}
	\end{align}
	for some $\bar{\epsilon} \in (\epsilon_0,1)$ and positive constant $C$, with $\PP(\tau < \infty) \le p_0$.
\end{theorem}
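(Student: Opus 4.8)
The plan is to construct the solution through a family of \emph{truncated} models and then remove the truncation on a large part of the probability space. First I would insert a smooth cutoff $\theta_R$, depending on the running quantity $\sup_{s\le t}\Vert u(s)\Vert_{H^{1/2}}^2 + \int_0^t \Vert u(s)\Vert_{H^{3/2}}^2\,ds$, in front of the advection term $\mathcal{P}\nabla\cdot(u\otimes u)$; using the bilinear bound \eqref{EQ15}, this renders the truncated nonlinearity globally Lipschitz and yields, via a contraction mapping in $L^2(\Omega; C([0,T];H^{1/2})\cap L^2(0,T;H^{3/2}))$, a global-in-time solution $u_R$ to the truncated equation for each fixed $T$. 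I would then define the stopping time $\tau$ as the first instant at which the running quantity reaches a threshold of order $\bar\epsilon^2$; before $\tau$ the cutoff is inactive, so $u_R$ solves the genuine equation \eqref{EQ01}, and the pathwise bound \eqref{EQ66} holds by construction.

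The heart of the matter is the energy estimate, obtained by applying It\^o's formula to $\Vert u(t)\Vert_{H^{1/2}}^2$. The parabolic term produces the dissipation $-2\Vert u\Vert_{H^{3/2}}^2$; the It\^o correction from the transport noise is controlled with \eqref{EQ14} at $s=1/2$ and the boundedness of $\mathcal{P}$ on $H^{1/2}$, giving $\Vert \mathcal{P}((b\cdot\nabla)u)\Vert_{H^{1/2}}^2 \le \epsilon_b^2\Vert u\Vert_{H^{3/2}}^2$; and the advection term is handled by integration by parts together with \eqref{EQ15}, yielding $|(u,\mathcal{P}\nabla\cdot(u\otimes u))_{H^{1/2}}| \lec \Vert u\Vert_{H^{1/2}}\Vert u\Vert_{H^{3/2}}^2$. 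Since $\epsilon_b$ is small and $\Vert u\Vert_{H^{1/2}}$ stays below $\bar\epsilon$ up to $\tau$, the coefficient of $\Vert u\Vert_{H^{3/2}}^2$ in the drift remains strictly negative, so after localizing and taking expectations the martingale part drops and one obtains the supermartingale inequality $\EE\Vert u(t\wedge\tau)\Vert_{H^{1/2}}^2 + c\,\EE\int_0^{t\wedge\tau}\Vert u\Vert_{H^{3/2}}^2\,ds \le \EE\Vert u_0\Vert_{H^{1/2}}^2$. A Burkholder--Davis--Gundy bound on the martingale, whose bracket is $\lec \epsilon_b^2\int \Vert u\Vert_{H^{1/2}}^2\Vert u\Vert_{H^{3/2}}^2\,ds$, combined with Young's inequality to absorb the supremum and the residual $\Vert u\Vert_{H^{3/2}}^2$ contribution, upgrades this to \eqref{EQ65}.

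To estimate the failure probability I would invoke Chebyshev's inequality with \eqref{EQ65}:
\begin{equation*}
\PP(\tau<\infty)\le \PP\Bigl(\sup_{t}\Vert u(t)\Vert_{H^{1/2}}^2 \ge c\,\bar\epsilon^2\Bigr)\le \frac{\EE\bigl[\sup_t\Vert u\Vert_{H^{1/2}}^2\bigr]}{c\,\bar\epsilon^2}\le \frac{C\epsilon_0^2}{\bar\epsilon^2}.
\end{equation*}
Fixing $\bar\epsilon$ small in terms of the constants appearing in the energy estimate, and then choosing $\epsilon_0$ small relative to $\bar\epsilon$, makes the right-hand side at most $p_0$; this is consistent with the ordering $\epsilon_0<\bar\epsilon<1$ in the statement. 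Uniqueness follows from the same machinery applied to the difference $w=u_1-u_2$ of two solutions with common data: on the stopped interval where both obey \eqref{EQ66}, the difference equation closes by Gr\"onwall's inequality and the smallness of the norms, forcing $w\equiv 0$, hence pathwise uniqueness.

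The main obstacle I expect is the criticality of $H^{1/2}$: the advection term scales exactly like the dissipation, so there is no slack, and the whole scheme depends on the smallness of the data, of the running norm (enforced by $\tau$), and of $\epsilon_b$ to keep the effective dissipation coefficient positive. A second delicate feature, specific to transport noise, is that its It\^o correction sits at the critical level $H^{3/2}$ and thus competes directly with the parabolic smoothing; it is precisely the smallness of $\epsilon_b$ in \eqref{EQ14} that preserves the coercivity needed to close the estimates. Finally, one must verify carefully that up to $\tau$ the truncated and original models genuinely coincide, so that the global solution of the truncated problem is a bona fide strong solution of \eqref{EQ01}.
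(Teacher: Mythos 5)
Your proposal is correct and follows essentially the same route as the paper: a cutoff as in \eqref{EQ03} keyed to the running critical quantity, global solvability of the truncated model by contraction for $\bar{\epsilon}, \epsilon_b$ small (Lemmas \ref{L03}--\ref{L04}), the It\^o--BDG energy estimate of Lemma \ref{L08}, the stopping time \eqref{EQ10} on which the cutoff is inactive together with a Markov/Chebyshev bound giving $\PP(\tau<\infty)\le p_0$ (Lemma \ref{L07}), and a difference estimate for pathwise uniqueness (Lemma \ref{L05}). Your minor deviations --- a single contraction rather than the paper's nested iterations, the crude bound $\EE\bigl[\sup_{t} Q(t)^2\bigr]\lesssim \epsilon_0^2$ in place of the refined excess estimate \eqref{EQ100} used in Lemmas \ref{L06} and \ref{L07} (both suffice, and positivity of $\tau$ is immediate from path continuity and $\epsilon_0<\bar{\epsilon}$), and Gr\"onwall-plus-smallness instead of direct absorption for uniqueness --- are cosmetic and do not change the architecture of the argument.
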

\colb

\begin{Remark}
Our proof does not rely on the compactness of $\mathbb{T}^3$, and thus our result also holds for the spatial domain $\mathbb{R}^3$. In particular, we can prove~\cite[Lemma~3.1]{AKX} and Lemma~\ref{L08} for $\mathbb{R}^3$ by replacing the discrete Fourier multiplier with the continuous one.
\end{Remark}

The proof of this theorem is presented in the next section.

\startnewsection{Proof of the Main Results}{sec03}

To address \eqref{EQ01}, we truncate the advection term using a cutoff function $\psi$ defined as follows,
\begin{align}
	\psi(v)
	:=
	\theta\left(
	\frac{1}{\bar{\epsilon}}
	\Vert v(t)\Vert_{H^{\frac{1}{2}}}
	+ \frac{1}{\bar{\epsilon}}\left(\int_0^t \Vert v(s)\Vert_{H^{{\frac{3}{2}}}}^2\,ds\right)^\frac12
	\right),
	\label{EQ03}
\end{align}
where $\theta\colon[0,\infty)\to [0,1]$ is a smooth function such that $\theta\equiv 1$ on $[0,2]$ and $\theta\equiv 0$ on~$[3,\infty)$, and $\bar{\epsilon}$ be a small positive constant to be determined. We will solve the truncated model using nested fixed point arguments, and as a first step, we make the following claim.
\cole
\begin{Lemma}\label{L03}
	Let $T>0$, $n \in \mathbb{N}$, $v \in L^2(\Omega; C([0,T],H^\frac{1}{2})) \cap L^2(\Omega; L^2([0,T], H^\frac{3}{2}))$, and $u_0 \in L^2(\Omega; H^\frac{1}{2})$. Suppose that both $v$ and $u_0$ are divergence- and average-free, and that $\epsilon_b>0$ in \eqref{EQ14} is sufficiently small.
	Then, the initial value problem
		\begin{align}
		\begin{split}
			&	(\partial_t - \Delta) u 
			= -\psi^2(v)\mathcal{P} \nabla \cdot  (v \otimes v) 
			+ \mathcal{P}( (b\cdot \nabla)  u )\dot{W},
			\\
			&\nabla\cdot u = 0,\\
			&u (0)=u_0\comma t\geq 0, ~x\in \TT^3
		\end{split}\label{EQ05}
	\end{align}
	has a unique divergence- and average-free (probabilistically) strong solution 
	$u \in L^2(\Omega; C([0,T],H^\frac{1}{2})) \cap L^2(\Omega; L^2([0,T], H^\frac{3}{2}))$. 
\end{Lemma}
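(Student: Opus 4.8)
The plan is to exploit the crucial structural feature of~\eqref{EQ05}: since $v$ is fixed, the equation is \emph{linear} in the unknown $u$. Writing $f:=-\psi^2(v)\mathcal{P}\nabla\cdot(v\otimes v)$, the term $f$ is a prescribed adapted process, and~\eqref{EQ05} becomes the linear stochastic parabolic problem $du=(\Delta u+f)\,dt+\mathcal{P}((b\cdot\nabla)u)\,dW$ with datum $u_0$. First I would check that the forcing lives in the right space, namely $f\in L^2(\Omega;L^2([0,T],H^{-\frac12}))$. Indeed, $\nabla\cdot$ costs one derivative, so by the product estimate~\eqref{EQ15}, $\Vert\mathcal{P}\nabla\cdot(v\otimes v)\Vert_{H^{-\frac12}}\lec\Vert v\otimes v\Vert_{H^{\frac12}}\lec\Vert v\Vert_{H^{\frac12}}\Vert v\Vert_{H^{\frac32}}$; on the support of $\psi(v)$ the cutoff~\eqref{EQ03} forces $\Vert v(t)\Vert_{H^{\frac12}}\le 3\bar\epsilon$, so that $\EE\int_0^T\Vert f\Vert_{H^{-\frac12}}^2\,dt\lec\bar\epsilon^2\,\EE\int_0^T\Vert v\Vert_{H^{\frac32}}^2\,dt<\infty$, using $v\in L^2(\Omega;L^2([0,T],H^{\frac32}))$.

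Existence and uniqueness I would obtain by a Galerkin scheme in the Gelfand triple $H^{\frac32}\hookrightarrow H^{\frac12}\hookrightarrow H^{-\frac12}$, projecting onto the finitely many divergence- and average-free Fourier modes $|k|\le N$. Each finite-dimensional truncation is a linear SDE with adapted coefficients and therefore has a unique solution $u^N$; projecting onto divergence-free average-free modes guarantees the constraint is preserved in the limit. The heart of the matter is the uniform energy estimate obtained by applying It\^o's formula to $\Vert u^N\Vert_{H^{\frac12}}^2$. The deterministic part yields the dissipation $-2\Vert u^N\Vert_{H^{\frac32}}^2+2\Vert u^N\Vert_{H^{\frac12}}^2$, the forcing contributes $2\langle f,u^N\rangle_{H^{\frac12}}\le\tfrac12\Vert u^N\Vert_{H^{\frac32}}^2+2\Vert f\Vert_{H^{-\frac12}}^2$, and the It\^o correction of the transport noise is exactly $\Vert\mathcal{P}((b\cdot\nabla)u^N)\Vert_{H^{\frac12}}^2$, which by~\eqref{EQ14} with $s=\frac12$ and the boundedness of $\mathcal{P}$ is at most $C\epsilon_b^2\Vert u^N\Vert_{H^{\frac32}}^2$. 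Collecting terms gives
\begin{align*}
\frac{d}{dt}\Vert u^N\Vert_{H^{\frac12}}^2+\bigl(\tfrac32-C\epsilon_b^2\bigr)\Vert u^N\Vert_{H^{\frac32}}^2\le 2\Vert u^N\Vert_{H^{\frac12}}^2+2\Vert f\Vert_{H^{-\frac12}}^2+dM_t,
\end{align*}
and for $\epsilon_b$ small the coefficient of $\Vert u^N\Vert_{H^{\frac32}}^2$ stays positive. Taking expectations kills the martingale $M_t$, Gr\"onwall controls $\EE\int_0^T\Vert u^N\Vert_{H^{\frac32}}^2\,dt$, and a Burkholder--Davis--Gundy estimate upgrades this to a bound on $\EE\sup_{[0,T]}\Vert u^N\Vert_{H^{\frac12}}^2$, all uniformly in $N$ and in terms of $\EE\Vert u_0\Vert_{H^{\frac12}}^2$ and $\EE\int_0^T\Vert f\Vert_{H^{-\frac12}}^2\,dt$.

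Because the problem is linear, I would avoid compactness and martingale-solution machinery: the difference $u^N-u^M$ solves the same equation with forcing $P_{\le N}f-P_{\le M}f$ and datum $P_{\le N}u_0-P_{\le M}u_0$, so the identical energy estimate shows $(u^N)$ is Cauchy in $L^2(\Omega;C([0,T],H^{\frac12}))\cap L^2(\Omega;L^2([0,T],H^{\frac32}))$ and converges strongly to a probabilistically strong solution $u$; passing to the limit in the (linear) weak formulation is then routine, with BDG handling the stochastic integral, and the divergence- and average-free constraints survive the strong limit. Uniqueness follows by applying the same estimate to the difference of two solutions. I expect the one genuinely delicate point to be the energy balance: the It\^o correction $\Vert\mathcal{P}((b\cdot\nabla)u)\Vert_{H^{\frac12}}^2$ is of the \emph{top order} $\Vert u\Vert_{H^{\frac32}}^2$, exactly matching the parabolic dissipation, so coercivity of the scheme---and hence the whole argument---hinges on $\epsilon_b$ being small enough to absorb it; this is precisely the role of the smallness hypothesis on $\epsilon_b$.
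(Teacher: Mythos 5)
Your reduction of \eqref{EQ05} to a linear SPDE, the verification that $f=-\psi^2(v)\mathcal{P}\nabla\cdot(v\otimes v)$ lies in $L^2(\Omega;L^2([0,T],H^{-\frac12}))$ via the cutoff, and the coercivity computation (the It\^o correction $\Vert\mathcal{P}((b\cdot\nabla)u)\Vert_{H^{1/2}}^2\le C\epsilon_b^2\Vert u\Vert_{H^{3/2}}^2$ absorbed by the parabolic dissipation for $\epsilon_b$ small) are all sound, and they isolate the same mechanism that drives the paper's proof. However, your limit passage has a genuine gap. In the Galerkin system the noise coefficient must itself be projected: $u^N$ solves $du^N=(\Delta u^N+P_{\le N}f)\,dt+P_{\le N}\mathcal{P}((b\cdot\nabla)u^N)\,dW$. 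Hence for $N>M$ the difference $w=u^N-u^M$ does \emph{not} solve ``the same equation with forcing $P_{\le N}f-P_{\le M}f$'': its noise coefficient is $P_{\le N}\mathcal{P}((b\cdot\nabla)w)+(P_{\le N}-P_{\le M})\mathcal{P}((b\cdot\nabla)u^M)$. The first piece is absorbed as before, but the second contributes to the It\^o correction a term of size $\epsilon_b^2\Vert u^M\Vert_{H^{3/2}}^2$, which is top order in $u^M$: it is uniformly bounded but carries no smallness as $M\to\infty$, since there is no uniform control of the high-frequency tail of $u^M$ in $H^{3/2}$ (the datum is only in $H^{1/2}$ and the forcing only in $H^{-1/2}$, so no uniform higher regularity is available to make $(I-P_{\le M})$ gain a negative power of $M$). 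Your energy estimate for $w$ therefore yields only $\EE\bigl[\sup_{t}\Vert w\Vert_{H^{1/2}}^2+\int_0^T\Vert w\Vert_{H^{3/2}}^2\,dt\bigr]\lec o(1)+\epsilon_b^2\,\EE\bigl[\int_0^T\Vert u^M\Vert_{H^{3/2}}^2\,dt\bigr]$, i.e., boundedness, not the claimed Cauchy property.

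The gap is repairable in two standard ways, and it is instructive that the paper chooses the second. First, since the equation is linear you may abandon the Cauchy claim and pass to \emph{weak} limits: the uniform bounds give $u^N\rightharpoonup u$ in $L^2(\Omega\times[0,T];H^{3/2})$ along a subsequence, and the It\^o integral, being a bounded linear operator on progressively measurable $L^2$ processes, commutes with weak convergence, so the weak formulation survives; this is exactly the classical Krylov--Rozovskii/Pardoux variational argument in the triple $H^{3/2}\hookrightarrow H^{1/2}\hookrightarrow H^{-1/2}$, whose coercivity hypothesis is precisely the computation you performed. Second, the paper instead \emph{lags the noise}: it iterates $(\partial_t-\Delta)u^{(m)}=f+\mathcal{P}((b\cdot\nabla)u^{(m-1)})\dot W$, so that each step is a heat equation with known forcing and known stochastic integrand, solvable by \cite[Lemma~3.1]{AKX}; the difference $U^{(m)}=u^{(m)}-u^{(m-1)}$ then satisfies a clean equation whose noise coefficient is $\mathcal{P}((b\cdot\nabla)U^{(m-1)})$, giving a genuine contraction with factor $C\epsilon_b^2$ and no projection mismatch, since the iteration index rather than a frequency cutoff carries the implicitness of the noise. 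Either repair leaves your energy estimates, the preservation of the divergence- and average-free constraints, and your uniqueness argument intact.
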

\colb





\begin{proof}[Proof of Lemma~\ref{L03}]
	Assuming that $u^{(-1)}=0$ and letting $m \in \mathbb{N}_0$, we consider
\begin{align}
	\begin{split}
	&(\partial_t - \Delta) \um 
	= -\psi^2(v) \mathcal{P} \nabla \cdot  (v \otimes v) 
	+ \mathcal{P}( (b\cdot \nabla)  u^{(m-1)}) \dot{W},
		\\
	&\nabla\cdot \um = 0,\\
	&\um (0)=u_0\comma t\geq 0, ~x\in \TT^3
	.
	\end{split}\label{EQ12}
\end{align}
Recall the Sobolev product inequality~\eqref{EQ15} and the definition of the cutoff function~\eqref{EQ03}. It follows by \cite[Lemma~3.1]{AKX} that \eqref{EQ12} admits a unique solution in $L^2(\Omega; C([0,T],H^\frac{1}{2})) \cap L^2(\Omega; L^2([0,T], H^\frac{3}{2}))$ when $m=0$, and this solution is divergence- and average-free. A standard induction argument extends this result to all $m \in \NNp$, yielding unique solutions in the same space, which remain divergence- and average-free.

Moreover, we obtain uniform-in-$m$ bounds. Specifically, by~\cite[Lemma~3.1]{AKX},
\begin{align}
	\begin{split}
		\EE\biggl[&
		\sup_{0\leq t\leq T}
		\Vert \um(t)\Vert_{H^{\frac12 }}^2
		+ \int_0^{T} \Vert \um(t)\Vert_{H^{\frac{3}{2}}}^2 \,dt\biggr]
		\\&\indeq
		\leq 
		C\EE\biggl[  \Vert u_0\Vert_{H^{\frac12}}^2\biggr]
		+C\EE\biggl[
		\int_0^{T} \psi^4(v) \Vert v \otimes v\Vert_{H^{\frac12}}^2
		\,dt
		+
		\int_0^{T}\Vert (b\cdot \nabla)  u^{(m-1)}\Vert_{H^{\frac12 }}^2
		\,dt
		\biggr]
		,
	\end{split}
	\label{EQ13}
\end{align}
where the constant $C$ is independent of time $T$ and the function $v$. For the quadratic term, we utilize~\eqref{EQ15} and~\eqref{EQ03}, obtaining
\begin{align}
	\EE\biggl[
	\int_0^{T} \psi^4(v) \Vert v \otimes v\Vert_{H^{\frac12}}^2
	\,dt
	\biggr]
	\lec
	 \EE\biggl[
	 \int_0^{T} \psi^4(v) \Vert v\Vert_{H^{\frac{1}{2}}}^2 \Vert v\Vert_{H^{\frac{3}{2}}}^2
	 \,dt
	 \biggr]
	 \lec
	  \bar{\epsilon}^2 \EE\biggl[
	  \int_0^{T} \psi^4(v) \Vert v\Vert_{H^{\frac{3}{2}}}^2
	  \,dt
	  \biggr]
	   \lec
	    \bar{\epsilon}^4
	    ,\label{EQ16}
\end{align}
while for the transport noise coefficient, we exploit~\eqref{EQ14}, deriving
\begin{align}
	\EE \biggl[\int_0^{T}\Vert (b \cdot \nabla) u^{(m-1)}\Vert_{H^{\frac12 }}^2
	\,dt
	\biggr]
	\lec
	 \epsilon_b^2 
	  \EE \biggl[\int_0^{T}\Vert u^{(m-1)}\Vert_{H^{\frac{3}{2} }}^2
	  \,dt
	  \biggr]
	.\label{EQ17}
\end{align}
Combining \eqref{EQ13}--\eqref{EQ17}, we arrive at
\begin{align}
	\begin{split}
		\EE\biggl[&
		\sup_{0\leq t\leq T}
		\Vert \um(t)\Vert_{H^{\frac12 }}^2
		+ \int_0^{T} \Vert \um(t)\Vert_{H^{\frac{3}{2}}}^2 \,dt\biggr]
		\\&\indeq
		\le 
		C\EE \biggl[\Vert u_0\Vert_{H^{\frac12}}^2\biggr] 
		+	C\bar{\epsilon}^4+C\epsilon_b^2 
		\EE \biggl[\int_0^{T}\Vert u^{(m-1)}\Vert_{{H}^{\frac{3}{2} }}^2
		\,dt
		\biggr],		
	\end{split}
	\llabel{EQ18}
\end{align}
for some $C$ that is independent of $T$, $v$, and $m$. We rewrite this as
\begin{align}
	K^{(m)} \le C\EE \biggl[\Vert u_0\Vert_{H^{\frac12}}^2\biggr]  + C\bar{\epsilon}^4
	 +C \epsilon_b^2 K^{(m-1)}
	 ,\llabel{EQ19}
\end{align}
where 
\begin{align}
	K^{(m)} = \EE\biggl[&
	\sup_{0\leq t\leq T}
	\Vert \um(t)\Vert_{H^{\frac12 }}^2
	+ \int_0^{T} \Vert \um(t)\Vert_{H^{\frac{3}{2}}}^2 \,dt\biggr]
	.\llabel{EQ20}
\end{align}
Therefore, when $\epsilon_b$ is sufficiently small,
we obtain that
\begin{align}
	K^{(m)} \le \frac{C\EE \biggl[\Vert u_0\Vert_{H^{\frac12}}^2\biggr]  + C\bar{\epsilon}^4}{1-C\epsilon_b^2}\lec \EE \biggl[\Vert u_0\Vert_{H^{\frac12}}^2\biggr]  + \bar{\epsilon}^4	\comma m \in \mathbb{N}. \label{EQ21}
\end{align}

Now, we establish the contraction property of $\{\um\}$. Taking the difference of \eqref{EQ12}, we write
\begin{align}
	(\partial_t - \Delta) U^{(m)} 
	= \mathcal{P} ((b\cdot \nabla) U^{(m-1)}) \dot{W}
	,\label{EQ22}
\end{align}
where $U^{(m)} = \um - u^{(m-1)}$, and $U^{(m)}(0) = 0$.
We employ~\cite[Lemma~3.1]{AKX} and obtain
\begin{align}
	\begin{split}
		\EE\biggl[ &
		\sup_{0\leq t\leq T}
		\Vert U^{(m)}(t)\Vert_{H^{\frac12 }}^2
		+ \int_0^{T} \Vert U^{(m)}(t)\Vert_{H^{\frac{3}{2}}}^2 \,dt\biggr]
		\\&\indeq
		\leq 
		C\EE \biggl[
		\int_0^{T}\Vert (b \cdot \nabla) U^{(m-1)}\Vert_{H^{\frac12 }}^2
		\,dt
		\biggr]
		\le C \epsilon_b^2 
		\EE \biggl[
		\int_0^{T}\Vert U^{(m-1)}\Vert_{H^{\frac{3}{2}}}^2
		\,dt
		\biggr]
		,\label{EQ23}
	\end{split}
\end{align}
from where choosing $\epsilon_b$ sufficiently small yields a fixed point $u$ of~\eqref{EQ12}, with the convergence being exponentially rapid. This fixed point obeys the bound in~\eqref{EQ21} with respect to all $T>0$ and all $v$. To claim that it is a divergence- and average-free solution, we may pass to the limit in the analytic weak formulations of~\eqref{EQ12}$_1$ and \eqref{EQ12}$_2$ using the exponential rate of convergence. To establish the pathwise uniqueness of the solutions, we may proceed as in \eqref{EQ22} and \eqref{EQ23}. This concludes the proof.
\end{proof}

With Lemma~\ref{L03}, we now establish the solvability of the truncated model of~\eqref{EQ01}, as detailed in the following lemma.

\cole
\begin{Lemma}\label{L04}
	Let $T>0$ and $u_0 \in L^2(\Omega; H^\frac{1}{2})$ be divergence- and average-free. 
	Moreover, assume that $\bar{\epsilon}>0$ in \eqref{EQ03} and $\epsilon_b>0$ in \eqref{EQ14} are sufficiently small.
Then, the initial value problem
\begin{align}
	\begin{split}
		&	(\partial_t - \Delta) u 
		= -\psi^2(u)\mathcal{P} \nabla \cdot  (u \otimes u) 
		+ \mathcal{P}( (b\cdot \nabla) u )\dot{W}
		\\
		&\nabla\cdot u = 0,\\
		&u (0)=u_0\comma t\geq 0, ~x\in \TT^3
	\end{split}\label{EQ02}
\end{align} 
has a unique divergence- and average-free strong solution
	$u \in L^2(\Omega; C([0,T],H^\frac{1}{2})) \cap L^2(\Omega; L^2([0,T], H^\frac{3}{2}))$.
\end{Lemma}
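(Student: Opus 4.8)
The plan is to realize \eqref{EQ02} as a fixed point of the map furnished by Lemma~\ref{L03}. Writing $X:=L^2(\Omega;C([0,T],H^{\frac12}))\cap L^2(\Omega;L^2([0,T],H^{\frac32}))$ for the space of divergence- and average-free fields, I define $\Phi\colon X\to X$ by letting $\Phi(v)$ be the unique solution $u$ of \eqref{EQ05} produced by Lemma~\ref{L03}. A fixed point of $\Phi$ is precisely a solution of \eqref{EQ02}, so everything reduces to showing that $\Phi$ contracts on a suitable set. From the uniform bound \eqref{EQ21} one reads off that $\Phi$ maps all of $X$ into the ball $B_R=\{\Vert u\Vert_X\le R\}$ with $R^2\lec\EE[\Vert u_0\Vert_{H^{\frac12}}^2]+\bar\epsilon^4$; in particular $\Phi(B_R)\subset B_R$.

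For the contraction I fix $v_1,v_2$, set $w=\Phi(v_1)-\Phi(v_2)$ and $g=v_1-v_2$, and subtract the two copies of \eqref{EQ05}. Since the transport-noise coefficient is linear in the solution,
\[
(\partial_t-\Delta)w=-\bigl(\psi^2(v_1)\mathcal{P}\nabla\cdot(v_1\otimes v_1)-\psi^2(v_2)\mathcal{P}\nabla\cdot(v_2\otimes v_2)\bigr)+\mathcal{P}((b\cdot\nabla)w)\dot W,\qquad w(0)=0.
\]
Applying \cite[Lemma~3.1]{AKX} exactly as in \eqref{EQ23} and absorbing the transport term for small $\epsilon_b$, the task becomes to estimate $\EE\int_0^T\Vert D\Vert_{H^{\frac12}}^2\,dt$, where $D:=\psi^2(v_1)(v_1\otimes v_1)-\psi^2(v_2)(v_2\otimes v_2)$.

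Next I would split $D=\psi^2(v_1)(v_1\otimes v_1-v_2\otimes v_2)+(\psi^2(v_1)-\psi^2(v_2))(v_2\otimes v_2)$. For the first piece I use bilinearity $v_1\otimes v_1-v_2\otimes v_2=v_1\otimes g+g\otimes v_2$ together with the product law \eqref{EQ15}; on the support of $\psi(v_1)$ the definition \eqref{EQ03} forces $\Vert v_1\Vert_{H^{\frac12}}\lec\bar\epsilon$ and $\int_0^T\psi^4(v_1)\Vert v_1\Vert_{H^{\frac32}}^2\,dt\lec\bar\epsilon^2$, so each term carries a factor $\bar\epsilon$ modulo contributions that are genuinely nonlinear in $g$. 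For the second piece the structural point is that $\psi$ is a \emph{spatially constant} multiplier, whence $\Vert(\psi^2(v_1)-\psi^2(v_2))(v_2\otimes v_2)\Vert_{H^{\frac12}}=|\psi^2(v_1)-\psi^2(v_2)|\,\Vert v_2\otimes v_2\Vert_{H^{\frac12}}$, and the Lipschitz continuity of $\theta$ gives
\[
|\psi^2(v_1)-\psi^2(v_2)|\lec\frac{1}{\bar\epsilon}\Bigl(\Vert g(t)\Vert_{H^{\frac12}}+\Bigl(\int_0^t\Vert g\Vert_{H^{\frac32}}^2\,ds\Bigr)^{1/2}\Bigr).
\]
The delicate point here is that $\Vert v_2\otimes v_2\Vert_{H^{\frac12}}\lec\Vert v_2\Vert_{H^{\frac12}}\Vert v_2\Vert_{H^{\frac32}}$ must be controlled on the cutoff-mismatch set, where $v_2$ need not be small and the factor $\bar\epsilon^{-1}$ looks dangerous. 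I would resolve this by observing that $\psi^2(v_1)-\psi^2(v_2)\ne0$ forces the $\theta$-argument of at least one of $v_1,v_2$ below $3$; combined with $\Vert v_2\Vert_{H^{\frac12}}\le\Vert v_1\Vert_{H^{\frac12}}+\Vert g\Vert_{H^{\frac12}}$ this yields $\Vert v_2(t)\Vert_{H^{\frac12}}\lec\bar\epsilon+\Vert g(t)\Vert_{H^{\frac12}}$ and $\int_0^T\indic\,\Vert v_2\Vert_{H^{\frac32}}^2\,dt\lec\bar\epsilon^2+\int_0^T\Vert g\Vert_{H^{\frac32}}^2\,dt$ on that set, cancelling the $\bar\epsilon^{-1}$.

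The main obstacle will be closing the contraction \emph{uniformly in the size of $u_0$}: the collected estimate still carries terms that are nonlinear in $g$ and are not controlled by $\Vert g\Vert_X^2$ once the radius $R$ above is large. To handle general data I would subtract the explicit linear stochastic flow $u_L$ solving $(\partial_t-\Delta)u_L=\mathcal{P}((b\cdot\nabla)u_L)\dot W$ with $u_L(0)=u_0$, and solve instead for $\phi=u-u_L$. Because the cutoff bounds the forcing in $L^2_tH^{-\frac12}$ by $O(\bar\epsilon^2)$, the estimate \eqref{EQ16} together with \cite[Lemma~3.1]{AKX} gives $\Vert\phi\Vert_X\lec\bar\epsilon^2$ \emph{uniformly in $u_0$}; moreover the large part $u_L$ cancels in every difference $g=\phi_1-\phi_2$, and on the support of $D$ the total field $u_L+\phi$ is itself $O(\bar\epsilon)$, so the fixed-point problem for $\phi$ is effectively a small-data one. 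The contraction constant is then $O(\bar\epsilon)$, Banach's theorem produces the unique $\phi$ and hence the unique $u$, and applying the same difference estimate to any two solutions of \eqref{EQ02} yields the asserted pathwise uniqueness.
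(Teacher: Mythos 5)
Your proposal is correct in substance and shares the paper's skeleton: both realize \eqref{EQ02} as a fixed point of the solution map of \eqref{EQ05} furnished by Lemma~\ref{L03}, and both close a contraction in $X$ via \cite[Lemma~3.1]{AKX}, the product law \eqref{EQ15}, and the $\bar\epsilon^{-1}$-Lipschitz continuity of the scalar cutoff (the paper runs this as the Picard iteration \eqref{EQ24} and then passes to the limit in the weak formulation with BDG for the noise; your direct appeal to Banach's theorem is an equivalent packaging that even spares the limit passage). Where you genuinely diverge is the treatment of the difference of truncated nonlinearities. The paper uses the symmetric four-term decomposition \eqref{EQ27}, in which every quadratically occurring field carries its \emph{own} matching cutoff (e.g.\ $I_4$ has $\psi(\umm[m-2]\!)$ multiplying $u^{(m-2)}\otimes u^{(m-2)}$, and the cross terms $I_2,I_3$ pair $\psi(\umm)$ with $\umm$ and $\psi(u^{(m-2)})$ with $u^{(m-2)}$); hence each quadratic factor is $O(\bar\epsilon)$ on the relevant support, the Lipschitz factor $\bar\epsilon^{-1}$ is absorbed as in \eqref{EQ28}, and the contraction constant is $O(\bar\epsilon^2+\epsilon_b^2)$ \emph{uniformly in $u_0$} --- no terms nonlinear in $g$ ever appear. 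Your asymmetric two-term splitting leaves the mismatch term $(\psi^2(v_1)-\psi^2(v_2))\,v_2\otimes v_2$ without an accompanying $\psi(v_2)$, and the ``main obstacle'' you then fight (nonlinear-in-$g$ remainders, loss of uniformity in the data) is an artifact of that choice. That said, your repair is sound: subtracting the linear flow $u_L$ and noting that the truncated forcing is $O(\bar\epsilon^2)$ in $L^2_\omega L^2_t H^{-1/2}$ (as in \eqref{EQ16}) gives $\Vert u-u_L\Vert_X\lec\bar\epsilon^2$ uniformly in $u_0$, so $\Phi$ preserves the affine ball $u_L+B_{C\bar\epsilon^2}$, where $g$ is automatically $O(\bar\epsilon^2)$ and your cubic terms contract; moreover this also yields uniqueness in \emph{all} of $X$, since any $X$-solution of \eqref{EQ02} lies in that affine ball by the same a priori estimate --- a point you should state explicitly. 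One step to write out carefully is the time-localization behind your bound $\int_0^T\indic\,\Vert v_2\Vert_{H^{3/2}}^2\,dt\lec\bar\epsilon^2+\int_0^T\Vert g\Vert_{H^{3/2}}^2\,dt$: because the $\theta$-argument in \eqref{EQ03} contains the nondecreasing quantity $\bigl(\int_0^t\Vert v_i\Vert_{H^{3/2}}^2\,ds\bigr)^{1/2}$, the set $\{t:\psi(v_i)(t)\neq0\}$ is contained in an interval on which $\int\Vert v_i\Vert_{H^{3/2}}^2\,ds\le 9\bar\epsilon^2$, which is exactly what cancels the $\bar\epsilon^{-1}$. Net comparison: your route works but is heavier; adopting the paper's symmetric splitting makes the $u_L$-subtraction unnecessary, while your formulation makes the fixed point solve the equation directly without a separate limiting argument.
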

\colb

\begin{proof}[Proof of Lemma~\ref{L04}]
	We consider the iterative scheme for \eqref{EQ02}:
	\begin{align}
		(\partial_t - \Delta) \um 
		= -\psi^2(u^{(m-1)}) \nabla \cdot \mathcal{P} (u^{(m-1)} \otimes u^{(m-1)}) 
		+ \mathcal{P} ((b \cdot \nabla)  \um) \dot{W}
		,\label{EQ24}
	\end{align}
where $m \in \mathbb{N}_0$, and $u^{(-1)}:=0$.
	By an induction argument and Lemma~\ref{L03}, \eqref{EQ24} has a unique incompressible solution $\um \in  L^2(\Omega; C([0,T],H^\frac{1}{2})) \cap L^2(\Omega; L^2([0,T], H^\frac{3}{2})$ with initial data $u_0$
	for all $m \in \mathbb{N}_0$. Moreover, $\um$ are average-free and uniformly bounded in $m$.

	To identify the fixed point of the sequence $\{ \um\}$ in $L^2(\Omega; C([0,T],H^\frac{1}{2})) \cap L^2(\Omega; L^2([0,T], H^\frac{3}{2})$, we write $U^{(m)} = \um - u^{(m-1)}$ and consider the difference of the equation~\eqref{EQ24}, 
	\begin{align}
		\begin{split}
		(\partial_t - \Delta) U^{(m)}
		&=- \bigl(\psi(\umm)-\psi(u^{(m-2)})\bigr) \psi(\umm ) \nabla \cdot \mathcal{P} (\umm  \otimes \umm )
		\\&\indeq - \psi(u^{(m-2)}) \psi(\umm ) \nabla \cdot \mathcal{P} (U^{(m-1)} \otimes \umm )
		\\&\indeq  -\psi(u^{(m-2)}) \psi(\umm ) \nabla \cdot \mathcal{P} (u^{(m-2)} \otimes U^{(m-1)})
		 \\&\indeq  -\psi(u^{(m-2)}) \bigl(\psi(\umm )-\psi(u^{(m-2)})\bigr) \nabla \cdot \mathcal{P} (u^{(m-2)} \otimes u^{(m-2)})
		\\&\indeq+ \mathcal{P} ((b \cdot \nabla) U^{(m)})) \dot{W}  
		=I_1+\ldots +I_5.\label{EQ27}
		\end{split}
	\end{align}
	We start by estimating $I_1$. Utilizing~\eqref{EQ15}, we derive
	\begin{align}
		\begin{split}
		\EE \biggl[ \int_0^T \Vert I_1\Vert_{H^{-\frac{1}{2}}}^2\,dt \biggr]
		 &\lec
		  \EE \biggl[ \sup_{0\leq t\leq T} \bigl| \psi(\umm )-\psi(u^{(m-2)} ) \bigr|^2
		   \int_0^T \psi^2(\umm ) \Vert \umm \Vert_{H^{\frac12}}^2\Vert u^{(m-2)}\Vert_{{H^\frac{3}{2}}}^2\,dt \biggr]
		  \\ &\lec
		   \bar{\epsilon}^2 
		    \EE \biggl[ \sup_{0\leq t\leq T} \bigl| \psi(\umm )-\psi(u^{(m-2)}) \bigr|^2
		   \int_0^T \psi^2(\umm ) \Vert \umm \Vert_{{H^\frac{3}{2}}}^2\,dt \biggr]
		     \\ &\lec 
		      \bar{\epsilon}^4 
		      \EE \biggl[ \sup_{0\leq t\leq T} \bigl| \psi(\umm )-\psi(u^{(m-2)}) \bigr|^2 \biggr]
		      \\ &\lec 
		      \bar{\epsilon}^2 
		      \EE \biggl[ \sup_{0\leq t\leq T} \bigl\Vert U^{(m-1)} \bigr\Vert_{H^{\frac12}}^2
		       +\int_0^T \Vert U^{(m-1)}\Vert_{H^{\frac{3}{2}}}^2 \,dt\biggr]
		   .\label{EQ28}
		   \end{split}
	\end{align}
	A similar computation for $I_4$ yields
	\begin{align}
		\EE \biggl[ \int_0^T \Vert I_4\Vert_{H^{-\frac{1}{2}}}^2\,dt\biggr]
		\lec   \bar{\epsilon}^2 
		\EE \biggl[  \sup_{0\leq t\leq T} \bigl\Vert U^{(m-1)} \bigr\Vert_{H^{\frac12}}^2
		+\int_0^T \Vert U^{(m-1)}\Vert_{H^{\frac{3}{2}}}^2\,dt \biggr]
		.\label{EQ29}
	\end{align}

	For $I_2$, we have instead
	\begin{align}
		\begin{split}
			\EE \biggl[ \int_0^T \Vert I_2\Vert_{H^{-\frac{1}{2}}}^2\biggr]
			&\lec
			\EE \biggl[ 
			\int_0^T \psi^2(u^{(m-1)})\psi^2(u^{(m-2)}) 
			\left(\Vert U^{(m-1)}\Vert_{H^{\frac12}}^2\Vert u^{(m-1)}\Vert_{{H^\frac{3}{2}}}^2
			+\Vert U^{(m-1)}\Vert_{H^{\frac{3}{2}}}^2\Vert u^{(m-1)}\Vert_{{H^\frac{1}{2}}}^2
			\right) \biggr]
			\\ &\lec 
			\EE \biggl[ \sup_{0\leq t\leq T} \Vert U^{(m-1)}\Vert_{H^{\frac12}}^2 
			\int_0^T \psi^2(u^{(m-1)}) 
			\Vert u^{(m-1)}\Vert_{{H^\frac{3}{2}}}^2
			\biggr]
			+\bar{\epsilon}^2
			\EE \biggl[ 
			\int_0^T  
			\Vert U^{(m-1)}\Vert_{H^{\frac{3}{2}}}^2
			\biggr]
			\\ & \lec 
			\bar{\epsilon}^2 
			\EE \biggl[ \sup_{0\leq t\leq T} \Vert U^{(m-1)}\Vert_{H^{\frac12}}^2
			+\int_0^T \Vert U^{(m-1)}\Vert_{H^{\frac{3}{2}}}^2 \biggr]
			.\label{EQ30}
		\end{split}
	\end{align}
	\colb
	The same bound holds for $I_3$, i.e.,
	\begin{align}
		\EE \biggl[ \int_0^T \Vert I_3\Vert_{H^{-\frac{1}{2}}}^2\,dt\biggr]
		\lec 
		\bar{\epsilon}^2 
		\EE \biggl[ \sup_{0\leq t\leq T} \Vert U^{(m-1)}\Vert_{H^{\frac12}}^2
		+\int_0^T \Vert U^{(m-1)}\Vert_{H^{\frac{3}{2}}}^2 \biggr]
		.\label{EQ31}
	\end{align}
	\colb
	
	Finally, for the term~$I_5$ we have
	\begin{align}
		\EE \biggl[ \int_0^T \Vert \mathcal{P} ((b \cdot \nabla) U^{(m)}))\Vert_{H^\frac{1}{2}}^2\,dt\biggr]
		\lec 
		\epsilon_b^2 
		\EE \biggl[ \int_0^T \Vert U^{(m)}\Vert_{H^{\frac{3}{2}}}^2\,dt \biggr]
		.\label{EQ32}
	\end{align}
	It then follows from~\eqref{EQ28}--\eqref{EQ32} and ~\cite[Lemma~3.1]{AKX} that
	 \begin{align*}
	 	\begin{split}
	 	&\EE \biggl[  \sup_{0\leq t\leq T} \bigl\Vert U^{(m)} \bigr\Vert_{H^{\frac12}}^2
	 	+\int_0^T \Vert U^{(m)}\Vert_{H^{\frac{3}{2}}}^2\,dt \biggr]
	 	\\ & \indeq \indeq 
	 	\lec \bar{\epsilon}^2 
	 	\EE \biggl[  \sup_{0\leq t\leq T} \bigl\Vert U^{(m-1)} \bigr\Vert_{H^{\frac12}}^2
	 	+\int_0^T \Vert U^{(m-1)}\Vert_{H^{\frac{3}{2}}}^2\,dt \biggr]+\epsilon_b^2 
	 	\EE \biggl[  \int_0^T \Vert U^{(m)}\Vert_{H^{\frac{3}{2}}}^2\,dt \biggr],
	 	\end{split}
	 \end{align*}
	and the desired fixed point $u$ is obtained by choosing $\bar{\epsilon}$ and $\epsilon_b$ sufficiently small. Moreover, the convergence to this fixed point is exponentially fast. Now, utilizing the exponential rate of convergence, we may pass to the limit in 
	\begin{align}
		\begin{split}
			&(u^{(m)}( t),\varphi)
			= ( u_0,\varphi)+\int_0^t \bigl((u^{(m)}, \Delta \varphi) 
			+ ( \psi^2(\umm ) \mathcal{P}(\umm  \otimes \umm ), \nabla \varphi)\bigr)\,ds
			\\& \indeq \indeq \indeq \indeq
			-\int_0^t (\mathcal{P}(b \otimes \um), \nabla \varphi)\,d\WW_s
		,
			\\
			&(u^{(m)}( t), \nabla \varphi)
			= 0	\comma (t,\omega)\text{-a.e.},
		\end{split}
		\llabel{EQ33}
	\end{align}
	where $\varphi\in C^{\infty}(\TT^3)$, and $t\in [0, T]$.

	The convergence of the non-noise terms is ensured by the dominated convergence theorem and the Sobolev product inequality~\eqref{EQ15}, after the following splitting,
	\begin{align}
		\begin{split}
		 &\left| 
			\int_0^T \left(\psi^2(\umm ) \mathcal{P} (\umm \otimes \umm )
			-\psi^2(u) \mathcal{P} (u\otimes u), \nabla \varphi\right) \,dt
			\right|
			\\ &\lec
			\left| 
			\int_0^T \left( (\psi(\umm ) -\psi(u))
			\psi(\umm ) \mathcal{P} (\umm \otimes \umm )
			, \nabla \varphi\right) \,dt\right|
			\\ &\indeq+
		\left| 
			\int_0^T \left(  \psi(u) \psi(\umm ) 
			\mathcal{P} \bigl((\umm  -u) \otimes \umm \bigr), \nabla \varphi\right) \,dt\right|
			\\ &\indeq +
			\left| 
			\int_0^T \left(  \psi(u) \psi(\umm ) 
			\mathcal{P} \bigl(u \otimes (\umm -u)\bigr), \nabla \varphi\right) \,dt\right|
			\\ &\indeq +
			\left| 
			\int_0^T \left(  \psi(u) \bigl(\psi(\umm )-\psi(u)\bigr) 
			\mathcal{P} \left(u \otimes u\right), \nabla \varphi\right) \,dt\right|.
\llabel{EQ35}
		\end{split}
	\end{align}

	For the noise term, we apply the Burkholder-Davis-Gundy (BDG) inequality, concluding
	 \begin{align}
		\begin{split}
			\mathbb{E}&\biggl[\sup_{0\le t \le T}
			\biggl|
			\int_0^t \left(\mathcal{P}(b \otimes (\um-u)), \nabla \varphi\right)\,d\WW_s
			\biggr|  \biggr]
			\lec
			\mathbb{E}\biggl[
			\biggl(
			\int_0^{T} \Vert \um-u\Vert_{L^{2}}^2 \,dt
			\biggr)^{1/2}\biggr]
			\to 0, 
			\llabel{EQ34}
		\end{split}
	\end{align}
	as $m \to \infty$.
In addition, since $\um$ is average-free, so is $u$.

	We may follow the proof of the contraction property to obtain the pathwise uniqueness of the solution.
\end{proof}

Now, we present a refined energy estimates for the solutions of \eqref{EQ02}. Define
\begin{align}
	{Q}(t)^2
	:=\Vert u(t)\Vert_{H^{\frac{1}{2}}}^2 
	+ \int_0^t \Vert u(s)\Vert_{H^{\frac{3}{2}}}^2 \,ds
	.\label{EQ09}
\end{align}

\cole
\begin{Lemma}\label{L08}
	Let $T>0$ and $u_0 \in L^2(\Omega; H^\frac{1}{2})$ be divergence- and average-free. 
	Moreover, assume that $\bar{\epsilon},~\epsilon_b>0$ in \eqref{EQ03} and \eqref{EQ14} are sufficiently small.
	Then, the pathwise unique solution 
	$u \in L^\infty(\Omega; C([0,T],H^\frac{1}{2})) \cap L^\infty(\Omega; L^2([0,T], H^\frac{3}{2}))$
	of \eqref{EQ02} satisfies 
	\begin{align}
		\EE \biggl[\sup_{0 \le t \le T} Q(t)^2 - \Vert u_0\Vert_{H^{\frac12}}^2 \biggr] 
		\lec 
		\epsilon_b^2 \EE \biggl[ \int_0^T \Vert u(t) \Vert_{H^{1}}^2\,dt\biggr]
		\label{EQ100}
	\end{align} 
	and
	\begin{align}
		\EE \biggl[\sup_{0 \le t \le T} Q(t)^2 \biggr] 
		\lec 
		\EE \biggl[ \Vert u_0\Vert_{H^{\frac12}}^2 \biggr] 
		,\label{EQ107}
	\end{align} 
	where the implicit constant is independent of $T$.
\end{Lemma}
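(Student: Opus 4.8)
\emph{The plan is to} run an It\^o energy balance for $\|u(t)\|_{H^{1/2}}^2$ along the solution of~\eqref{EQ02} and to separate the contributions that the parabolic smoothing absorbs from the genuinely lower-order remainder produced by the noise. Applying It\^o's formula to $t\mapsto\|u(t)\|_{H^{1/2}}^2=\|\Lambda^{1/2}u(t)\|_{L^2}^2$ (with $\Lambda=(I-\Delta)^{1/2}$, or its homogeneous analogue on $\mathbb{R}^3$) produces four terms: the dissipation $2(u,\Delta u)_{H^{1/2}}=-2\|\nabla u\|_{H^{1/2}}^2$, the nonlinear drift $-2\psi^2(u)(u,\mathcal{P}\nabla\cdot(u\otimes u))_{H^{1/2}}$, the martingale $2\int_0^t(u,\mathcal{P}((b\cdot\nabla)u))_{H^{1/2}}\,dW$, and the It\^o correction $\int_0^t\|\mathcal{P}((b\cdot\nabla)u)\|_{H^{1/2}}^2\,ds$. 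Since $u$ is average-free, the elementary bound $2\|\nabla u\|_{H^{1/2}}^2\ge\|u\|_{H^{3/2}}^2$ holds, so the left side of the resulting identity dominates $Q(t)^2$; equivalently one may read the identity off from the a priori estimate of~\cite[Lemma~3.1]{AKX}. It then remains to control the nonlinear, martingale, and It\^o-correction terms.

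For the nonlinear term I would dualize, $|(u,\mathcal{P}\nabla\cdot(u\otimes u))_{H^{1/2}}|\le\|u\|_{H^{3/2}}\|\mathcal{P}\nabla\cdot(u\otimes u)\|_{H^{-1/2}}\lec\|u\|_{H^{3/2}}\|u\otimes u\|_{H^{1/2}}$, and then invoke the product inequality~\eqref{EQ15} together with the cutoff~\eqref{EQ03}: on $\{\psi(u)\ne0\}$ one has $\|u(t)\|_{H^{1/2}}\lec\bar\epsilon$, so this term is $\lec\bar\epsilon\,\psi^2(u)\|u\|_{H^{3/2}}^2$ and is absorbed into the dissipation for small $\bar\epsilon$. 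For the martingale the transport structure is essential: because $\nabla\cdot b=0$, the leading (symmetric) part cancels, $(u,\mathcal{P}((b\cdot\nabla)u))_{H^{1/2}}=(\Lambda^{1/2}u,[\Lambda^{1/2},b\cdot\nabla]u)$, which is of lower order; applying the Burkholder--Davis--Gundy inequality and then Young's inequality bounds its supremum by a small multiple of $\EE[\sup_t Q^2]$ together with an $\epsilon_b^2\int_0^T\|u\|_{H^1}^2$ remainder, absorbed into the left-hand side and into the $H^1$ remainder respectively.

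The heart of the matter is the It\^o correction, and this is the step I expect to be the main obstacle: for a single It\^o noise the quadratic variation is a nonnegative square of size $\epsilon_b^2\|u\|_{H^{3/2}}^2$, so it does not cancel against anything and must instead be split against the viscous dissipation. Writing $\Lambda^{1/2}(b\cdot\nabla)u=(b\cdot\nabla)\Lambda^{1/2}u+[\Lambda^{1/2},b\cdot\nabla]u$ and using $\nabla\cdot b=0$, the top-order piece is estimated by the $s=0$ instance of~\eqref{EQ14} applied to $\Lambda^{1/2}u$, namely $\|(b\cdot\nabla)\Lambda^{1/2}u\|_{L^2}^2\le\epsilon_b^2\|\Lambda^{1/2}u\|_{H^1}^2=\epsilon_b^2(\|u\|_{H^{3/2}}^2+\|u\|_{H^{1/2}}^2)$. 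The genuinely top-order contribution $\epsilon_b^2\|u\|_{H^{3/2}}^2$ is absorbed by the dissipation for small $\epsilon_b$, while the remaining $\epsilon_b^2\|u\|_{H^{1/2}}^2$ together with the commutator term, which is one order lower, are controlled by $\epsilon_b^2\|u\|_{H^1}^2$. After this absorption, taking the supremum in time and the expectation yields precisely~\eqref{EQ100}. The delicate point throughout is to verify that nothing beyond $H^1$ order survives, which is exactly where both the divergence-free condition and the low-regularity ($s=0$) case of~\eqref{EQ14} are needed.

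Finally,~\eqref{EQ107} follows from the same It\^o identity by a coarser accounting that avoids any $T$-dependence, so I would not route it through~\eqref{EQ100}. Bounding the It\^o correction crudely by $\epsilon_b^2\int_0^T\|u\|_{H^{3/2}}^2$ and the nonlinear term by $\bar\epsilon\int_0^T\|u\|_{H^{3/2}}^2$, all top-order terms are absorbed into the dissipation; the martingale is handled by Burkholder--Davis--Gundy and Young, with both resulting pieces, $\sup_t\|u\|_{H^{1/2}}^2$ and $\int_0^T\|u\|_{H^{3/2}}^2$, absorbed into $\EE[\sup_t Q^2]$ with a coefficient $\lec\bar\epsilon+\epsilon_b$. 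Since this route generates no time integral of a lower-order norm, one obtains $\EE[\sup_t Q^2]\le(1-C(\bar\epsilon+\epsilon_b))^{-1}\,\EE\|u_0\|_{H^{1/2}}^2$, with constant independent of $T$, which is~\eqref{EQ107}.
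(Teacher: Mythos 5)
Your skeleton matches the paper's (It\^o's formula for $\Vert u\Vert_{H^{1/2}}^2$, absorption of the truncated nonlinearity into the dissipation via \eqref{EQ03} and \eqref{EQ15}, BDG for the martingale), but two steps do not hold up. First, the commutator decomposition of the transport term is both unnecessary and unsupported by the hypotheses. The only assumption on $b$ is \eqref{EQ14} for $s\in\{0,1/2\}$; no Lipschitz or other pointwise regularity of $b$ is assumed, so there is no available estimate for $[\bar{\Lambda}^{1/2},b\cdot\nabla]u$, and certainly no reason such a commutator should carry the small factor $\epsilon_b$, which in \eqref{EQ14} is attached to the full operator $(b\cdot\nabla)$ and need not survive a splitting into $(b\cdot\nabla)\bar{\Lambda}^{1/2}$ plus a remainder. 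What you call ``the heart of the matter'' is in fact a non-obstacle: the paper bounds the It\^o correction directly by the $s=1/2$ case of \eqref{EQ14}, $\Vert (b\cdot\nabla)u\Vert_{H^{1/2}}\le \epsilon_b\Vert u\Vert_{H^{3/2}}$, and absorbs $\epsilon_b^2\int_0^t\Vert u\Vert_{H^{3/2}}^2$ into the dissipation, while the martingale integrand is bounded by Cauchy--Schwarz and the $s=0$ case, $|(\mathcal{P}((b\cdot\nabla)u),\bar{\Lambda}u)|\le \Vert (b\cdot\nabla)u\Vert_{L^2}\Vert u\Vert_{H^1}\le \epsilon_b\Vert u\Vert_{H^1}^2$, which is exactly where the $H^1$ norm on the right of \eqref{EQ100} comes from.

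Second, your handling of the martingale for \eqref{EQ100} is structurally wrong for the inequality as stated: you bound its supremum by ``a small multiple of $\EE[\sup_t Q^2]$'' plus an $\epsilon_b^2\int_0^T\Vert u\Vert_{H^1}^2$ remainder and absorb the former into the left-hand side. But the left-hand side of \eqref{EQ100} is the \emph{excess} $\EE[\sup_t Q(t)^2-\Vert u_0\Vert_{H^{1/2}}^2]$; after your absorption you only obtain $\EE[\sup_t Q^2]\le (1-c)^{-1}\bigl(\EE[\Vert u_0\Vert_{H^{1/2}}^2]+C\epsilon_b^2\,\EE[\int_0^T\Vert u\Vert_{H^1}^2]\bigr)$, i.e.\ \eqref{EQ100} with an extra term of order $c\,\EE[\Vert u_0\Vert_{H^{1/2}}^2]$ on the right, where $c$ is a fixed small constant. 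This weakened inequality breaks the downstream argument: in Lemma~\ref{L06}, \eqref{EQ100} is combined with the interpolation \eqref{EQ51} and H\"older in time to produce a bound of size $\epsilon_b^2\epsilon_0^2\sqrt{\delta}$ that vanishes as $\delta\to 0$, which is precisely what gives $\PP(\tau<\delta)\to 0$ and hence $\tau>0$ almost surely; a $\delta$-independent $O(\epsilon_0^2)$ remainder would leave $\PP(\tau=0)$ merely small rather than zero. So for \eqref{EQ100} the entire martingale contribution must be kept in the $H^1$ remainder, as the paper does via BDG together with the pathwise bound $\epsilon_b\Vert u\Vert_{H^1}^2$ on the integrand, with no absorption into $\EE[\sup_t Q^2]$. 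A minor point in your favor: your direct derivation of \eqref{EQ107} (coarse BDG plus Young, absorbing both $\sup_t\Vert u\Vert_{H^{1/2}}^2$ and $\int_0^T\Vert u\Vert_{H^{3/2}}^2$) is a legitimate $T$-independent alternative to the paper's route, which instead deduces \eqref{EQ107} from \eqref{EQ100} using $H^{3/2}\hookrightarrow H^1$ and absorption for small $\epsilon_b$ --- but it, too, must estimate the martingale integrand through \eqref{EQ14} (e.g.\ $|(\bar{\Lambda}^{1/2}(b\cdot\nabla)u,\bar{\Lambda}^{1/2}u)|\le\epsilon_b\Vert u\Vert_{H^{3/2}}\Vert u\Vert_{H^{1/2}}$ from the $s=1/2$ case) rather than through a commutator; for \eqref{EQ107}, unlike \eqref{EQ100}, absorption into $\EE[\sup_t Q^2]$ is harmless.
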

\colb

\begin{proof}[Proof of Lemma~\ref{L08}]
	Denote by $\bar{\Lambda}$ the non-homogenous differential operator whose Fourier symbol is $(1+|k|^2)^{1/2}$, and let $T>0$ be fixed.
	We employ It\^o's formula, obtaining
	\begin{align}
		\begin{split}
			&\Vert\bar{\Lambda}^{\frac12}u(t)\Vert_{L^2}^{2}
			- \Vert\bar{\Lambda}^{\frac12}(u_0)\Vert_{L^2}^{2} 
			+ 2\int_0^t \int_{\TT^3} | \bar{\Lambda}^{\frac{3}{2}} u(s)|^2 \,dx ds
			\\&\indeq
			\leq
			2\int_0^t\left|  (\psi^2(u)\mathcal{P} \nabla \cdot  (u \otimes u),\bar{\Lambda} u)\right| \,ds
			\\&\indeq\indeq
			+ \int_0^t\int_{\TT^3} \vert \bar{\Lambda}^{\frac12} \mathcal{P}( (b\cdot \nabla) u )\vert^2\,dx ds
			+ 2 
			\left|\int_0^t( \mathcal{P}( (b \cdot \nabla) u ),\bar{\Lambda} u(s)) \,d\WW_s\right|
			\\&\indeq
			= 
			I_1 + I_2 + I_3
			\commaone t\in [0,T].    
		\end{split}
		\label{EQ102}
	\end{align}
	Upon using Young's inequality and the properties of cutoff, we estimate $I_1$ as
	\begin{align}
		I_1 \le C_\eta \bar{\epsilon}^2 \int_0^t \Vert u\Vert_{H^{\frac{3}{2}}}^2
		+ \eta \int_0^t \Vert u\Vert_{H^{\frac{3}{2}}}^2
		,\llabel{EQ103}
	\end{align}
	where $\eta>0$, while for $I_2$, we have
	\begin{align}
		I_2 \lec \epsilon_b^2 \int_0^t \Vert u\Vert_{H^{\frac{3}{2}}}^2
		.\llabel{EQ104}
	\end{align}
We choose $\eta$, $\bar{\epsilon}$, and $\epsilon_b$ sufficiently small so that the left-hand side of~\eqref{EQ102} absorbs $I_1 + I_2$, obtaining
	\begin{align}
		\Vert u(t)\Vert_{H^\frac{1}{2}}^{2}
		- \Vert u_0\Vert_{H^\frac{1}{2}}^{2} 
		+ \int_0^t \Vert u(t)\Vert_{H^\frac{1}{2}}^{2} \,ds
		\lec I_3
		\commaone t\in [0,T].    
		\llabel{EQ105}
	\end{align}
	Then, we take the supremum in $t$ and expectation in $\omega$, and utilize the BDG inequality for $I_3$,
	obtaining
	\begin{align}
		\EE \biggl[\sup_{0 \le t \le T} Q(t)^2 - \Vert u_0\Vert_{H^\frac{1}{2}}^{2}\biggr]
		\lec
		\EE \biggl[ \sup_{0 \le t \le T} I_3 \biggr] \lec
		\epsilon_b^2 \EE \biggl[ \int_0^T \Vert u (t)\Vert_{H^{1}}^2\,dt \biggr]
		,\llabel{EQ106}
	\end{align}
	where the implicit constant is independent of $T$. Finally, \eqref{EQ107} follows by choosing $\epsilon_b$ sufficiently small and absorbing the right-hand side of \eqref{EQ100}, using the embedding $H^{\frac{3}{2}} \hookrightarrow H^1$.
\end{proof}

To remove the impact of the cutoff function, we introduce the stopping time:
\begin{align}
	\tau = \inf \{ t>0 : Q(t) > \bar{\epsilon}\}
	,\label{EQ10}
\end{align}
where $\bar{\epsilon}$ is the parameter appearing in the definition of the cutoff functions in \eqref{EQ03}. It then follows from this definition that
\begin{align}
	\sup_{0 \le t \le \tau} Q(t)^2 \le \bar{\epsilon}^2
	\comma \PP \text{ a.s.} \label{EQ60}
\end{align} 

Now, we claim that the stopping time $\tau$ is almost surely positive.

\cole
\begin{Lemma}\label{L06}
	Assume that the parameters $\epsilon_b , ~\bar{\epsilon} > 0$ in \eqref{EQ14} and \eqref{EQ03} are sufficiently small, and that the initial data satisfy $\sup_{\Omega}\|u_0\|_{H^{1/2}} \le \epsilon_0$ with $\epsilon_0$ small relative to $\bar{\epsilon}$. Then the stopping time $\tau$ defined in \eqref{EQ10} is positive $\PP$-almost surely.
\end{Lemma}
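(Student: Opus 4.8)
The plan is to read off the positivity of $\tau$ directly from the pathwise regularity of the solution supplied by Lemma~\ref{L04}, combined with the strict gap between the data size $\epsilon_0$ and the cutoff threshold $\bar{\epsilon}$ at the initial time. By Lemma~\ref{L04} (whose hypotheses are met once $\epsilon_b,\bar{\epsilon}$ are small), the solution $u$ of~\eqref{EQ02} lies in $L^2(\Omega; C([0,T],H^\frac12)) \cap L^2(\Omega; L^2([0,T],H^\frac32))$; hence there is a set of full $\PP$-measure on which the trajectory $t\mapsto u(t)$ is continuous into $H^\frac12$ and $\int_0^T\|u(s)\|_{H^\frac32}^2\,ds<\infty$. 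I would fix one such $\omega$ and work pathwise.

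First I would verify that $t\mapsto Q(t)$, with $Q$ as in~\eqref{EQ09}, is continuous on $[0,T]$. The term $\|u(t)\|_{H^\frac12}^2$ is continuous since $u\in C([0,T],H^\frac12)$, while the accumulated dissipation $\int_0^t\|u(s)\|_{H^\frac32}^2\,ds$ is (absolutely) continuous in $t$ and vanishes at $t=0$, because its integrand belongs to $L^1([0,T])$. Therefore $Q(\cdot)^2$, and hence $Q(\cdot)$, is continuous on $[0,T]$ with $Q(0)=\|u_0\|_{H^\frac12}$.

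Next I would invoke the quantitative smallness of the data: since $\sup_\Omega\|u_0\|_{H^\frac12}\le\epsilon_0$ and $\epsilon_0$ is small relative to $\bar{\epsilon}$---in particular $\epsilon_0<\bar{\epsilon}$---we obtain $Q(0)\le\epsilon_0<\bar{\epsilon}$. By continuity there is $\delta(\omega)>0$ with $Q(t)<\bar{\epsilon}$ for all $t\in[0,\delta(\omega)]$, so the set $\{t>0:Q(t)>\bar{\epsilon}\}$ appearing in~\eqref{EQ10} contains no point of $(0,\delta(\omega)]$; thus $\tau(\omega)\ge\delta(\omega)>0$. Since this holds for $\PP$-almost every $\omega$, we conclude that $\tau>0$ $\PP$-a.s. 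The one point deserving care---though it is mild---is the continuity in $t$ of the cumulative term $\int_0^t\|u(s)\|_{H^\frac32}^2\,ds$, which is nothing more than the a.s. integrability $u\in L^2([0,T],H^\frac32)$ built into the solution class of Lemma~\ref{L04}; no estimate beyond that lemma and the strict inequality $\epsilon_0<\bar{\epsilon}$ is needed, and the cutoff structure of~\eqref{EQ02} plays no role in this step.
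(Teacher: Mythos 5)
Your proof is correct, but it takes a genuinely different route from the paper. You argue pathwise: the solution class of Lemma~\ref{L04} gives a.s.\ continuity of $t\mapsto\|u(t)\|_{H^{1/2}}^2$ and a.s.\ finiteness of $\int_0^T\|u\|_{H^{3/2}}^2\,dt$, hence $Q(\cdot)$ in \eqref{EQ09} is a.s.\ continuous with $Q(0)\le\epsilon_0<\bar{\epsilon}$, so $\tau>0$ a.s.\ by continuity at $t=0$; the only quantitative input is the strict gap $\epsilon_0<\bar{\epsilon}$. The paper instead proceeds quantitatively: it applies the energy estimate \eqref{EQ100} of Lemma~\ref{L08} on $[0,\delta]$, the interpolation \eqref{EQ51}, H\"older's inequality (producing the factor $\sqrt{\delta}$), and \eqref{EQ107}, to get $\EE[\sup_{0\le t\le\delta}Q(t)^2-\|u_0\|_{H^{1/2}}^2]\lesssim\epsilon_b^2\epsilon_0^2\sqrt{\delta}$, and then Markov's inequality to conclude $\PP(\tau<\delta)\lesssim\sqrt{\delta}$, whence $\PP(\tau=0)=\lim_n\PP(\tau<1/n)=0$. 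What each buys: your argument is softer and shorter, needing neither Lemma~\ref{L08} nor any smallness of $\epsilon_b$ beyond what Lemma~\ref{L04} already requires, and it delivers exactly the statement of the lemma; the paper's argument is strictly stronger in that it yields an explicit modulus $\PP(\tau<\delta)\lesssim\sqrt{\delta}$ for the small-time failure probability, and it rehearses the same Markov-type mechanism that is reused in Lemma~\ref{L07} to bound $\PP(\tau<\infty)$. Since nothing downstream in the paper uses the $\sqrt{\delta}$ rate, your proof suffices for the lemma as stated; just make sure to note (as you implicitly do) that the a.s.\ continuous trajectories are those of the fixed version of $u$ produced by Lemma~\ref{L04}, and that ``$\epsilon_0$ small relative to $\bar{\epsilon}$'' is used only through the strict inequality $\epsilon_0<\bar{\epsilon}$.
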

\colb

\begin{proof}[Proof of Lemma~\ref{L06}]
	Let $\delta>0$. We employ \eqref{EQ100} with $T$ replaced by $\delta$, obtaining that
	\begin{align}
		\begin{split}
			\EE\biggl[&
			\sup_{0\leq t\leq \delta}
			\left(\Vert u(t)\Vert_{H^{\frac12 }}^2
			- \Vert u_0\Vert_{H^{\frac12}}^2
			+ \int_0^{t} \Vert u(t)\Vert_{H^{\frac{3}{2}}}^2 \,dt\right)\biggr]
			\lec
			\epsilon_b^2 \EE\biggl[
			\int_0^{\delta} \Vert u(t)\Vert_{H^{1}}^2\,dt
			\biggr]
			,
		\end{split}
		\label{EQ48}
	\end{align}
	from where using the Sobolev interpolation inequality, 
	\begin{align}
		\Vert f\Vert_{{H}^{1}}^2
		\leq 
		\Vert f\Vert_{H^{\frac12}}
		\Vert f\Vert_{H^{\frac{3}{2}}}
		,\label{EQ51}
	\end{align}
	we arrive at
	\begin{align}
		\begin{split}
			\EE\biggl[&
			\sup_{0\leq t\leq \delta}\left(
			\Vert u(t)\Vert_{H^{\frac12 }}^2
			- \Vert u_0\Vert_{H^{\frac12}}^2
			+ \int_0^{t} \Vert u(t)\Vert_{H^{\frac{3}{2}}}^2 \,dt\right)\biggr]
			\lec
			\epsilon_b^2 \EE\biggl[ 
			\int_0^{\delta} \Vert u(t)\Vert_{H^{\frac{1}{2}}}\Vert u(t)\Vert_{H^{\frac{3}{2}}}
			\,dt
			\biggr]
				.
		\end{split}	\label{EQ49}
	\end{align}
	Next, we employ H\" older's inequality and derive 
	\begin{align}
		\begin{split}
			\EE\biggl[&
		\sup_{0\leq t\leq \delta}\left(
		\Vert u(t)\Vert_{H^{\frac12 }}^2
		- \Vert u_0\Vert_{H^{\frac12}}^2
		+ \int_0^{t} \Vert u(t)\Vert_{H^{\frac{3}{2}}}^2 \,dt\right)\biggr]
		\lec
		\epsilon_b^2 \EE\biggl[ \sup_{0 \le t \le \delta} \Vert u\Vert_{H^{\frac12}}
		\int_0^{\delta} \Vert u(t)\Vert_{H^{\frac{3}{2}}}
		\,dt
		\biggr]
		\\&\indeq\lec
		\epsilon_b^2 \left( \EE \biggl[ \sup_{0 \le t \le \delta} \Vert u(t)\Vert_{H^{\frac{1}{2}}}\biggr]^2\right)^\frac12
		\left( \EE \biggl[ \int_0^\delta \Vert u(t)\Vert_{H^{\frac{3}{2}}}\,dt\biggr]^2\right)^\frac12
		\\&\indeq\lec \epsilon_b^2 \sqrt{\delta}
		\left( \EE \biggl[ \sup_{0 \le t \le \delta} \Vert u(t)\Vert_{H^{\frac{1}{2}}}^2 \biggr]\right)^\frac12
		\left( \EE \biggl[ \int_0^\delta \Vert u(t)\Vert_{H^{\frac{3}{2}}}^2 \,dt\biggr]\right)^\frac12
		.\end{split}
		\label{EQ53}
	\end{align}
	Finally, utilizing \eqref{EQ107} together with the assumption~$\sup_{\Omega}\|u_0\|_{H^{1/2}} \le \epsilon_0$, we conclude
	\begin{align}
		\begin{split}
			\EE\biggl[&
		\sup_{0\leq t\leq \delta}\left(
		\Vert u(t)\Vert_{H^{\frac12 }}^2
		- \Vert u_0\Vert_{H^{\frac12}}^2
		+ \int_0^{t} \Vert u(t)\Vert_{H^{\frac{3}{2}}}^2 \,dt\right)\biggr]
		\lec
		\epsilon_b^2 \epsilon_0^2 \sqrt{\delta}
		.
		\end{split}\label{EQ56}
	\end{align}

Now, we choose $\epsilon_0^2<\bar{\epsilon}^2/2$ and apply Markov's inequality obtaining,
	\begin{align}
		\begin{split}
			\PP(\tau &< \delta)
			\le \PP \left(\sup_{0\le t \le \delta} Q (t)^2\ge \bar{\epsilon}^2\right)
			\le \PP \left(\sup_{0\le t \le \delta} Q (t)^2 -  \Vert u_0\Vert_{H^{\frac12}}^2\ge \bar{\epsilon}^2 - \epsilon_0^2\right)
			\\ & \quad
			 \le  \frac{\EE\biggl[ \sup_{0\le t \le \delta} Q (t)^2-\Vert u_0\Vert_{H^{\frac12}}^2\biggr]}
			{\bar{\epsilon}^2- \epsilon_0^2}
			\lec \sqrt{\delta}  
			.\llabel{EQ47}
		\end{split}
	\end{align} 
	Then, $\PP(\tau =0)=\lim_{n\to \infty} \PP(\tau <\frac1n)=0$,
	which concludes the proof. 
\end{proof}

Clearly, the non-truncated model~\eqref{EQ01}$_1$ coincides with the truncated model~\eqref{EQ02}$_1$ up to time $\tau$. Therefore, assuming that $\sup_{\omega}\Vert u_0\Vert_{H^{\frac12}}\le \epsilon_0$, with $\epsilon_0$ small relative to $\bar{\epsilon}$, we may apply Lemma~\ref{L04} and conclude that \eqref{EQ01} admits a probabilistically strong solution that is divergence-free and mean-free at least up to time $\tau$. Moreover, the energy estimates \eqref{EQ65}–\eqref{EQ66} for this solution follow from \eqref{EQ100}, \eqref{EQ107}, and \eqref{EQ60}. It remains to establish pathwise uniqueness under the smallness assumption.
\begin{Lemma}\label{L05}
	Assume that the parameters $\bar{\epsilon}, ~\epsilon_b > 0$ in \eqref{EQ03} and \eqref{EQ14} are sufficiently small. If $(\tilde{u}, \tilde{\tau})$ and $(u, \tau)$ are both solutions to \eqref{EQ01} satisfying \eqref{EQ65} and \eqref{EQ66}, then 
	\[
	\PP\bigl(u(t)=\tilde{u}(t) \text{ for all } t\in [0, \tau\wedge \tilde{\tau} ] \bigr)=1.
	\]
\end{Lemma}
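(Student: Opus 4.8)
The plan is to run a direct energy estimate on the difference $w=u-\tilde u$ over the stochastic interval $[0,\tau\wedge\tilde\tau]$, exploiting that both solutions remain $O(\bar{\epsilon})$-small by~\eqref{EQ66}. Since both $u$ and $\tilde u$ solve~\eqref{EQ01} with the same datum $u_0$, we have $w(0)=0$, and, using the algebraic identity $u\otimes u-\tilde u\otimes\tilde u = u\otimes w + w\otimes\tilde u$, the difference obeys on $[0,\tau\wedge\tilde\tau]$
\begin{align*}
(\partial_t-\Delta)w = -\mathcal{P}\nabla\cdot(u\otimes w + w\otimes\tilde u) + \mathcal{P}((b\cdot\nabla)w)\dot{W}.
\end{align*}

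Next, as in the proof of Lemma~\ref{L08}, I would apply It\^o's formula to $\Vert\bar\Lambda^{1/2}w\Vert_{L^2}^2$, producing the dissipation $2\int_0^t\Vert w\Vert_{H^{3/2}}^2$, a quadratic drift from the advection difference, the It\^o correction $\int_0^t\Vert\bar\Lambda^{1/2}\mathcal{P}((b\cdot\nabla)w)\Vert_{L^2}^2$, and a stochastic integral $M_t$. Pairing the advection drift against $\bar\Lambda w$ and invoking~\eqref{EQ15}, each factor $u\otimes w$ and $w\otimes\tilde u$ contributes terms of the form $\Vert u\Vert_{H^{1/2}}\Vert w\Vert_{H^{3/2}}^2$ and $\Vert w\Vert_{H^{1/2}}\bigl(\Vert u\Vert_{H^{3/2}}+\Vert\tilde u\Vert_{H^{3/2}}\bigr)\Vert w\Vert_{H^{3/2}}$. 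Since $\Vert u\Vert_{H^{1/2}},\Vert\tilde u\Vert_{H^{1/2}}\lec\bar{\epsilon}$ by~\eqref{EQ66}, the first type carries a small prefactor and is absorbed into the dissipation; the second type I would split by Young's inequality as $\eta\Vert w\Vert_{H^{3/2}}^2+C_\eta\Vert w\Vert_{H^{1/2}}^2 g(t)$, where $g(t)=\Vert u(t)\Vert_{H^{3/2}}^2+\Vert\tilde u(t)\Vert_{H^{3/2}}^2$. The It\^o correction is bounded by $\epsilon_b^2\Vert w\Vert_{H^{3/2}}^2$ via~\eqref{EQ14}. Choosing $\bar{\epsilon},\epsilon_b,\eta$ small, all $\Vert w\Vert_{H^{3/2}}^2$ contributions are absorbed by the left-hand dissipation.

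I would then take the supremum over $t\in[0,\tau\wedge\tilde\tau]$ and expectations, controlling $M_t$ by the BDG inequality exactly as $I_3$ was treated in Lemma~\ref{L08}: $\EE[\sup|M_t|]\lec\epsilon_b\,\EE[\sup\Vert w\Vert_{H^{1/2}}(\int\Vert w\Vert_{H^{3/2}}^2)^{1/2}]$, which Young's inequality splits into a piece absorbed by $\tfrac12\EE[\sup\Vert w\Vert_{H^{1/2}}^2]$ and a piece $\epsilon_b^2\EE[\int\Vert w\Vert_{H^{3/2}}^2]$ absorbed by the dissipation. (To make BDG and all integrals rigorous on the random interval, I would first localize with a reducing sequence of stopping times and pass to the limit at the end.) What survives is
\begin{align*}
\EE\Bigl[\sup_{0\le t\le\tau\wedge\tilde\tau}\Vert w(t)\Vert_{H^{1/2}}^2\Bigr] \lec \EE\Bigl[\int_0^{\tau\wedge\tilde\tau}\Vert w\Vert_{H^{1/2}}^2\,g(s)\,ds\Bigr].
\end{align*}

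The decisive step, and the one I expect to be the main obstacle, is closing this inequality despite the Gr\"onwall factor $g$ being random. Here I would use that~\eqref{EQ66} furnishes the \emph{deterministic} almost-sure bound $\int_0^{\tau\wedge\tilde\tau}g(s)\,ds\lec\bar{\epsilon}^2$ for both solutions; hence the right-hand side is bounded pathwise by $\sup_t\Vert w\Vert_{H^{1/2}}^2\int_0^{\tau\wedge\tilde\tau}g\,ds\lec\bar{\epsilon}^2\sup_t\Vert w\Vert_{H^{1/2}}^2$, so that
\begin{align*}
\EE\Bigl[\sup_{0\le t\le\tau\wedge\tilde\tau}\Vert w(t)\Vert_{H^{1/2}}^2\Bigr] \le C\bar{\epsilon}^2\,\EE\Bigl[\sup_{0\le t\le\tau\wedge\tilde\tau}\Vert w(t)\Vert_{H^{1/2}}^2\Bigr].
\end{align*}
Choosing $\bar{\epsilon}$ small enough that $C\bar{\epsilon}^2<1$ forces $\EE[\sup\Vert w\Vert_{H^{1/2}}^2]=0$, hence $w\equiv0$ on $[0,\tau\wedge\tilde\tau]$ almost surely, which is the claim. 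This deterministic a.s. control of the dissipation integral is precisely what allows me to bypass a stochastic Gr\"onwall argument and simply absorb the Gr\"onwall term; the only genuine care required is the stopping-time localization guaranteeing that the martingale and all integrals are integrable on the random interval.
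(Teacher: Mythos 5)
Your proof is correct and follows essentially the same route as the paper: the same difference equation with the decomposition $u\otimes u-\tilde u\otimes\tilde u=u\otimes w+w\otimes\tilde u$, the product estimate \eqref{EQ15}, and, decisively, the pathwise almost-sure bound \eqref{EQ66} used to absorb both the dissipative contributions and the random Gr\"onwall factor, closing by smallness of $\bar{\epsilon}$ and $\epsilon_b$ so that the whole right-hand side is absorbed into the left. The only difference is presentational: where you rederive the energy inequality by hand via It\^o's formula and the BDG inequality (mirroring the proof of Lemma~\ref{L08}, with the localization caveat you correctly flag), the paper simply invokes \cite[Lemma~3.1]{AKX} applied to the difference model.
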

\begin{proof}[Proof of Lemma~\ref{L05}]
We apply ~\cite[Lemma~3.1]{AKX} to the difference model and derive the estimates 
    \begin{align}
    	\begin{split}
    		\EE\biggl[&
    		\sup_{0\leq t\leq \tau \wedge \tilde{\tau}}
    		\Vert u(t)-\tilde{u}(t)\Vert_{H^{\frac12 }}^2
    		+ \int_0^{\tau \wedge \tilde{\tau}} \Vert u(t)-\tilde{u}(t)\Vert_{H^{\frac{3}{2}}}^2 \,dt\biggr]
    			\\&\indeq
    		\lec \EE\biggl[
    		\int_0^{\tau \wedge \tilde{\tau}}  \Vert u \otimes (u-\tilde{u})\Vert_{H^{\frac12}}^2
   +
   \Vert \tilde{u} \otimes (u-\tilde{u})\Vert_{H^{\frac12}}^2
    		+
    	\Vert (b\cdot \nabla)  (u-\tilde{u})\Vert_{H^{\frac12 }}^2
    		\,dt
    		\biggr]
    		\\&\indeq
    		\lec (\bar{\epsilon}^2+\epsilon_b^2)
    		\EE\biggl[
    		\sup_{0\leq t\leq \tau \wedge \tilde{\tau}}
    		\Vert u(t)-\tilde{u}(t)\Vert_{H^{\frac12 }}^2
    		+ \int_0^{\tau \wedge \tilde{\tau}} \Vert u(t)-\tilde{u}(t)\Vert_{H^{\frac{3}{2}}}^2 \,dt\biggr]
    		,
    	\end{split}
    	\llabel{EQ61}
    \end{align}
    yielding pathwise uniqueness of the solution. 
   \end{proof} 

We note that neither the bounds established for the solutions nor the smallness assumptions on $\bar{\epsilon}$ and $\epsilon_b$ depend on $T$. Therefore, it is reasonable to expect that the probability $\tau = \infty$ is large.

\cole
\begin{Lemma}\label{L07}
	Assume that the parameters $\bar{\epsilon}, ~\epsilon_b $ in \eqref{EQ03} and \eqref{EQ14} are sufficiently small. Then, for any $p_0 \in (0,1)$, there exists $\epsilon_0 > 0$, chosen sufficiently small relative to $\bar{\epsilon}$, such that if $\,\sup_{\Omega}\Vert u_0\Vert_{H^{1/2}} \le \epsilon_0$, then $\PP(\tau < \infty) \le p_0$.
\end{Lemma}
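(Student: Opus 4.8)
The plan is to exploit the fact that the energy estimate \eqref{EQ107} of Lemma~\ref{L08}, applied to the solution of the truncated model \eqref{EQ02}, holds with a constant independent of $T$; since that solution exists on $[0,T]$ for every $T>0$, this uniformity is precisely what makes the passage to the limit $T\to\infty$ possible. First I would fix an arbitrary $T>0$ and note that $Q$ is continuous in $t$, because $u\in C([0,T],H^{\frac12})$ and $t\mapsto\int_0^t\Vert u(s)\Vert_{H^{\frac32}}^2\,ds$ is continuous. Consequently, by the definition \eqref{EQ10} of $\tau$, the event $\{\tau\le T\}$ is contained in $\{\sup_{0\le t\le T}Q(t)^2\ge\bar{\epsilon}^2\}$, since $Q$ must attain the value $\bar{\epsilon}$ somewhere on $[0,T]$ whenever $\tau\le T$.

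Next I would apply Markov's inequality together with \eqref{EQ107} and the hypothesis $\sup_{\Omega}\Vert u_0\Vert_{H^{1/2}}\le\epsilon_0$, obtaining
\begin{align*}
	\PP(\tau\le T)
	\le
	\PP\Bigl(\sup_{0\le t\le T}Q(t)^2\ge\bar{\epsilon}^2\Bigr)
	\le
	\frac{1}{\bar{\epsilon}^2}\,\EE\Bigl[\sup_{0\le t\le T}Q(t)^2\Bigr]
	\le
	\frac{C}{\bar{\epsilon}^2}\,\EE\bigl[\Vert u_0\Vert_{H^{\frac12}}^2\bigr]
	\le
	\frac{C\epsilon_0^2}{\bar{\epsilon}^2},
\end{align*}
where $C$ is the constant from \eqref{EQ107} and does not depend on $T$. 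Since the right-hand side is likewise independent of $T$, and $\{\tau\le T\}\uparrow\{\tau<\infty\}$ as $T\to\infty$, continuity of the measure from below yields $\PP(\tau<\infty)=\lim_{T\to\infty}\PP(\tau\le T)\le C\epsilon_0^2/\bar{\epsilon}^2$.

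Finally, given $p_0\in(0,1)$, I would choose $\epsilon_0$ sufficiently small relative to $\bar{\epsilon}$ so that $C\epsilon_0^2/\bar{\epsilon}^2\le p_0$, that is $\epsilon_0\le\bar{\epsilon}\sqrt{p_0/C}$; this gives $\PP(\tau<\infty)\le p_0$, as claimed. The main obstacle is not this final estimate, which is a routine Markov argument, but rather the requirement — already settled in Lemma~\ref{L08} — that the constant in \eqref{EQ107} be genuinely uniform in $T$. That uniformity, obtained there by absorbing the transport-noise contribution through the smallness of $\epsilon_b$ and the embedding $H^{\frac32}\hookrightarrow H^1$, is exactly what allows letting $T\to\infty$; without it the bound on $\PP(\tau\le T)$ would deteriorate with $T$ and the conclusion would break down.
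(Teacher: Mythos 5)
Your proof is correct and follows essentially the same route as the paper: a Markov inequality for $\sup_{0\le t\le T}Q(t)^2$ combined with the $T$-uniform energy bound of Lemma~\ref{L08}, then letting $T\to\infty$. The only (immaterial) difference is that you apply \eqref{EQ107} directly, while the paper works with the centered quantity $\sup_t Q(t)^2-\Vert u_0\Vert_{H^{1/2}}^2$ via \eqref{EQ100}, gaining an extra factor $\epsilon_b^2$ and the threshold $\bar{\epsilon}^2-\epsilon_0^2$, neither of which is needed for the stated conclusion.
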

\colb

\begin{proof}[Proof of Lemma~\ref{L07}]
	Let $T>0$. Recall~\eqref{EQ100}--\eqref{EQ107}.
	Then, the embedding $H^{\frac{3}{2}}\hookrightarrow  H^1$ and 
	the assumption on $u_0$ imply
	\begin{align}
		\begin{split}
			\EE\biggl[&
			\sup_{0\leq t\leq T}
			\left(\Vert u(t)\Vert_{H^{\frac12 }}^2
			-\Vert u_0 \Vert_{H^{\frac12 }}^2
			+ \int_0^{t} \Vert u(s)\Vert_{H^{\frac{3}{2}}}^2 \,ds\right)\biggr]
			\lec \epsilon_b^2 \epsilon_0^2
			,
		\end{split}
		\llabel{EQ56}
	\end{align}
	where the implicit constant is independent of $T$. Assuming that $\epsilon_0$ is relatively small compared to $\bar{\epsilon}$,
	we have
	\begin{align}
		\begin{split}
			\PP(\tau &< T)
			\le \PP \left(\sup_{0\le t \le T} Q (t)^2\ge \bar{\epsilon}^2\right)
			\le \PP \left(\sup_{0\le t \le T} Q(t)^2 - \Vert u_0\Vert_{H^{\frac12}}^2\ge \bar{\epsilon}^2 - \epsilon_0^2\right)
			\\& \le  \frac{ \EE\biggl[ \sup_{0\le t \le T} Q(t)^2 -\Vert u_0\Vert_{H^{\frac12}}^2\biggr]}{\bar{\epsilon}^2- \epsilon_0^2}
			\lec \frac{\epsilon_b^2 \epsilon_0^2}{\bar{\epsilon}^2-\epsilon_0^2}  
			\lec \frac{\epsilon_0^2}{\bar{\epsilon}^2-\epsilon_0^2}
			\leq p_0\llabel{EQ58}
		\end{split}
	\end{align}
	for any predesignated $p_0\in (0,1)$, upon choosing $\epsilon_0$ sufficiently small. Since $T$ is arbitrary, we conclude $\PP(\tau < \infty) \le p_0$.
\end{proof}

\section*{Acknowledgments}
\rm
MSA was supported in part by the NSF grant DMS-2205493 and the USC Dornsife Summer Research Award.

\ifnum\sketches=1

\fi

\end{document}